\newcommand{\Pl}{Pl\"ucker}
\newcommand{\rk}{\mathrm{rk}\,}
\newcommand{\hilbSchemeFunctor}[2]{\underline{\mathbf{Hilb}}^{#2}_{#1}}
\newcommand{\hilbScheme}[2]{\mathbf{Hilb}^{#2}_{#1}}
\newcommand{\PP}{{\mathbb{P}}}
\newcommand{\A}{{\mathbb{A}}}
\newcommand{\Proj}{\textnormal{Proj}}
\newcommand{\Hom}{\textnormal{Hom}}
\newcommand{\Imm}{\mathrm{Im}}
\newcommand{\Ker}{\mathrm{Ker}}
\newcommand{\Dim}{\mathrm{dim}}
\newcommand{\Codim}{\mathrm{codim}}
\newcommand{\GrassScheme}[2]{\mathbf{Gr}_{#1}^{#2}}
\newcommand{\fd}{\rightarrow}
 \numberwithin{equation}{section}
\newtheorem{lemma}{Lemma}[section] 
\newtheorem{theorem}[lemma]{Theorem} 
\newtheorem{proposition}[lemma]{Proposition} 
\newtheorem*{theoremA}{Theorem A}
\newtheorem*{theoremB}{Theorem B}
\theoremstyle{definition}
\newtheorem{definition}[lemma]{Definition} 
\newtheorem{notation}[lemma]{Notation} 
\newtheorem{remark}[lemma]{Remark} 
\newtheorem{example}[lemma]{Example}
\def\bcR{\color{red}}
\def\bcr{\color{red}}
\def\ecr{\color{black}}
\def\bco{\color{orange}}
\DeclareMathAlphabet{\mathpzc}{OT1}{pzc}{m}{it} 
\newenvironment{listecompacte}
{\begin{list}
    {\ensuremath{\bullet}}
    {\setlength{\topsep}{2pt}
      \setlength{\itemsep}{1pt} \setlength{\parsep}{0pt}}
}
{\end{list}
}
\begin{document}
\sloppy
\title{Quivers and equations a la Pl\"ucker for the Hilbert scheme}
\date{}
\author{Laurent Evain and Margherita Roggero}
\maketitle

\newcommand{\rg}{r} 

\newcommand{\mhu}[1]{\mu_{{#1}}} 
\newcommand{\M}[1]{M_{#1}} 
\newcommand{\matm}{M}
\newcommand{\Q}{Q} 
\newcommand{\n}{n} 
\newcommand{\ro}{\rho} 
\newcommand{\dg}{d} 
\newcommand{\p}{p} 
\newcommand{\q}{q} 

\newcommand{\algb}{B} 

\newcommand{\spec}[1]{\mathrm{Spec}({#1})} %
\newcommand{\gl}[1]{GL_{{#1}}} %
\newcommand{\mon}[1]{{\bf x}^{#1}} 
\newcommand{\pk}[1]{P_{{#1}}} 
\newcommand{\C}{C} 

\newcommand{\h}[1]{\fxh_{{#1}}} 
\newcommand{\localg}[0]{\fxg} 

\newcommand{\kk}{k} %
\newcommand{\kkbar}{{\overline k}} %

\newcommand{\marginnote}[1]{\marginpar{\footnotesize{#1}}}
\newcommand{\lt}[1]{\marginpar{ \color{blue} \footnotesize{#1}}}
\newcommand{\mgh}[1]{\marginpar{ \color{red} \footnotesize{#1}}}




\section*{Abstract: }
Several moduli spaces parametrising linear subspaces 
of the projective space are cut out by linear and quadratic equations
in their natural embedding: Grassmannians, 
Flag varieties, and Schubert varieties. 
The goal of this paper is to prove that a similar statement 
holds when one replaces linear subspaces with
algebraic subschemes of the projective space.
We exhibit equations of degree 1 and 2 that 
define schematically the Hilbert scheme 
 $\hilbScheme{\PP^n}{\p}$ 
 (for all,  possibly non-constant,
Hilbert polynomial $\p$) 
in its standard embedding $\hilbScheme{\PP^n}\p\hookrightarrow \GrassScheme{S_{R+1}}{\p(R+1)}$ with $R$ 
 any degree     larger than or equal to the Gotzmann number $r$ of
 $p$. For every $R< r$ these linear and quadratic  equations
 constructed,  and suitable linear inequalities define the locally
 closed embedding $\hilbScheme{\PP^n}{p, [R]}\hookrightarrow
 \GrassScheme{S_{R+1}}{\p(R+1)}$ of the Hilbert scheme parametrising
 subschemes with regularity upper bounded by $R$. 

The equations 
are  reminiscent of the Pl\"ucker relations for
Grassmannians: they are explicit and built formally with permutations 
on indexes on the Pl\"ucker coordinates. 
Our method relies on a new  description   of the Hilbert scheme 
as a quotient of a scheme of quiver representations.

\section{Introduction}
\label{sec:introduction}

\bcR

\bigskip

\ecr

The Pl\"ucker coordinates on a Grassmannian satisfy the well known 
Pl\"ucker relations \cite{KleimanLaksov}. Similarly, the flag varieties are defined 
by quadratic equations and Schubert varieties are defined by quadratic
and linear equations \cite{ramanathan87:equationsForSchubertVarieties, fultonYoungTableaux}.
The goal of this paper is to prove that, 
in a similar way, the Hilbert schemes parametrising 
closed subschemes of a projective space are defined by simple explicit
linear and quadratic equations in their natural embeddings. 

The Hilbert schemes 
carry in general a natural non reduced structure inherited from their functorial
construction. Our equations take into account the non reduced structure and
define the Hilbert schemes schematically.

Let $\hilbScheme{\PP^n}p$ be the Hilbert scheme
parametrising closed subschemes  of $\PP^n$   with
Hilbert polynomial $\p$. It can be embedded in the  Grassmannian  $\GrassScheme{S_R}{p(R)}$, 
where $R$ is any integer 
larger or equal to the Castelnuovo-Mumford-Gotzmann number $r$ of $p$ and $S_{R}=H^0 \mathcal O_{\PP^n}(R)$. 
Composing with
the Pl\"ucker embedding $\GrassScheme{S_{R}}{\p(R)}\subset \PP^{D(R)}$, 
 $D(R):={\binom{\Dim S_{R}}{\p(R)}}-1$, we consider the problem of finding equations 
for the Hilbert scheme in $\PP^{D(R)}$.

The question of finding equations for the Hilbert scheme as a subscheme of  a Grassmannian has been addressed many times after 
its  introduction  by Grothendieck.

The equations that arise depend much on the
way the Hilbert scheme is constructed.  The initial
construction by Grothendieck involved flattening stratifications
\cite[Lemme 3.4]{grothendieck60:techniquesConstructionEtSchemasDeHilbert}.  
Techniques were
developed to compute local equations for the flat stratum corresponding 
to the Hilbert scheme \cite{galligo}\cite[Proposition 0.5]{granger}.

 The work by
Gotzmann \cite{gotzmann} leads to a description of the Hilbert scheme 
as a determinantal locus in a product of Grassmannians. 
A new description  for the Hilbert scheme  as a subscheme of a single Grassmannian given    by local determinantal conditions  was
conjectured by Bayer in his PhD thesis \cite{Bayer82} and proved by Iarrobino and Kleiman in
 \cite[Appendix C]{iarrobino-kleiman:around_gotzmann_number} also
 exploiting an argument of Grothendieck.  Haiman and Sturmfels obtain
  Bayer's description as a special case of  their own construction of the multigraded Hilbert
scheme \cite{haiman_sturmfels02:multigradedHilbertSchemes}. 
In \cite{BLMR}  and \cite{lella-roggero:functoriality}, Brachat, Lella, Mourrain and Roggero define the Hilbert 
scheme using   functors which involve  symmetries  of the Hilbert
scheme given by the action of \noindent $GL_n$.
Commuting matrices of multiplication by variables  and border bases  have been applied to define equations for  Hilbert
schemes of points by Alonso, Brachat and Mourrain  \cite{ABM}.

The various approaches 
lead to equations of different degrees:  degree $n+1$, only
depending on the \lq\lq ambient\rq\rq\ space $\PP^n$, for those by
 Bayer, Iarrobino-Kleiman and Haiman-Sturmfels,     degree $deg(p)+2$, only depending
on the Hilbert polynomial, for those by
Brachat-Lella-Mourrain-Roggero.  

We will see that it is possible to 
find equations of degree 1 and 2 that cut out the
Hilbert scheme for every, possibly nonconstant $p$. These are obviously the smallest 
possible degrees since  in general  Hilbert schemes are not   linear spaces, not
even   linear sections of a Grassmannian \cite[Section 7.2]{BLMR}.

It was remarked by Haiman and Sturmfels
\cite{haiman_sturmfels02:multigradedHilbertSchemes}
that the framework of 
a quite theoretical construction of the Hilbert scheme
provides access to equations hardly accessible by direct computation. 
In cryptography, systems built with rich structures are possibly fragile
because attackers may extract information from the structure.
The above list of examples suggest that similarly
each  new   description  of the Hilbert scheme could expose a structure 
providing access to some new sets of equations. 

Starting from these remarks  the natural question is: how to 
produce a new  description  for the Hilbert scheme?

We considered the construction by Nakajima of $\hilbScheme{\A^2}{p}$, 
 for constant $p$   \cite{nakajimaLecturesOnHilbertSchemes}.  It is at a
crossroads of several approaches. It is related to the 
framed moduli space of torsion free sheaves on $\PP^2$,
monads and adhm-structures,  quivers of commuting matrices.

Our project was to provide a
 description   in the same vein for 
$\hilbScheme{\PP^n}{p}$, i.e.  
we wanted to replace the constant $p$ by any Hilbert polynomial $p$ 
and the affine plane $\A^2$ by a projective space $\PP^n$ of any
dimension.

An extension of Nakajima's construction 
has been realized by Bartocci, Bruzzo, Lanza and Rava 
\cite{bartocciBruzzoLanzaRava}.
They replace the affine plane $\A^2$ with 
the  total space of $\mathcal O_{\PP^1}(-n)$ and 
   use a description of the 
moduli space parametrising isomorphism classes of framed sheaves on 
the Hirzebruch surface $\Sigma_n$. 
The computations of the paper show that it is not possible to extend the initial
construction directly.  In the sheaf context, 
the  trivialization  at infinity of the sheaf is
responsible for the loss of projectivity. Replacing the surface by a
higher dimensional variety or considering a nonconstant Hilbert
polynomial weakens the link between sheaves and Hilbert schemes.

We may reformulate the above obstructions to extend Nakajima's
construction 
in matrix terms. Recall that 
a zero-dimensional subscheme $Z\subset 
\A^2$ is represented by a pair of commuting matrices $X,Y$ 
corresponding to the multiplication by the variables $x,y$ on 
the vector space $O_Z\simeq \kk^{length(Z)}$, together with a cyclic
vector $v\in \kk^{length(Z)}$ for the pair $(X,Y)$. The matrices 
are determined up to the choice of the base of $O_Z$,
and the cyclic vector is the 
algebraic counterpart of the constant function $1\in O_Z$
generating $O_Z$ as a $k[x,y]$-module. 
Equivalently, the Hilbert scheme is constructed as a quotient of an
open set of a commuting variety, where the commuting variety is 
a moduli space parametrising pairs $(X,Y)$ of commuting matrices. 

When one tries to extend  the construction with commuting matrices from the case of zero-dimensional schemes in $ \mathbb A^2$  to the case of projective schemes   $Z\subset \PP^n$  with any Hilbert polynomial,  the first challenge is that of finding suitable vector spaces of finite dimension, as for instance $H^0(O_Z(t))$ (while the dimension of  $H_*(\mathcal O_Z)$  is  infinite).

The multiplication by  variables $x_i$
yield morphisms
$\mu_i:H^0(O_Z(t))\rightarrow H^0(O_Z(t+1))$ and, if  $t$ is chosen  larger 
than  or equal to  the Castelnuovo-Mumford regularity of $Z$, these maps contain much information on $Z$. 
However, the source space and the target space are different and the
commutativity $\mu_i\mu_j=\mu_j\mu_i$ does not make sense.  Indeed, when  $\p$ is nonconstant, the underlying matrices   $\mathcal M_i$  are not 
square matrices and their sizes are incompatible; when $p$ is
constant,  the matrix sizes are compatible 
but we miss a trivialization at infinity to identify 
$H^0(O_Z(t))$ with $H^0(O_Z(t+1))$. Indeed, in the projective case, there is 
no privileged element in $H^0(O_Z(t))$ and no natural cyclic vector notion.

The above analysis shows that for a construction of $\hilbScheme{\PP^n}{\p}$
based on the multiplicative action of the variables,  we require  a framework
where we can formulate substitutes for the
commutativity and  the cyclic conditions.

In the first part of the paper, 
we introduce a quiver and we formulate these substitutes as
technical conditions on the representations  
of  the quiver that we consider. 
We proceed as follows.

We choose any integer number $R$ larger than or equal to the  Gotzmann  number $\rg$ of  $p$ and we consider the quiver $\Q_\p$
with 4 vertices, $2n+3$ arrows, dimension vector $(\binom{R -1 + n}{ R-1},\binom{R+n}{  R},\p(R),\p(R+1))$ and corresponding vector spaces
$S_{R-1},S_{R},\kk^{\p(R)},\kk^{\p(R+1)}$, where
$S:=\kk[x_0,\dots,x_n]$.

%

\begin{center}
\begin{picture}(0,0)%
\includegraphics{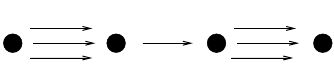}%
\end{picture}%
\setlength{\unitlength}{2072sp}%
\begingroup\makeatletter\ifx\SetFigFont\undefined%
\gdef\SetFigFont#1#2#3#4#5{%
  \reset@font\fontsize{#1}{#2pt}%
  \fontfamily{#3}\fontseries{#4}\fontshape{#5}%
  \selectfont}%
\fi\endgroup%
\begin{picture}(5070,1050)(1156,-2506)
\put(1801,-2446){\makebox(0,0)[lb]{\smash{{\SetFigFont{6}{7.2}{\rmdefault}{\mddefault}{\updefault}{\color[rgb]{0,0,0}$\mu_n$}%
}}}}
\put(1846,-1636){\makebox(0,0)[lb]{\smash{{\SetFigFont{6}{7.2}{\rmdefault}{\mddefault}{\updefault}{\color[rgb]{0,0,0}$\mu_0$}%
}}}}
\put(4861,-1591){\makebox(0,0)[lb]{\smash{{\SetFigFont{6}{7.2}{\rmdefault}{\mddefault}{\updefault}{\color[rgb]{0,0,0}$M_0$}%
}}}}
\put(4861,-2446){\makebox(0,0)[lb]{\smash{{\SetFigFont{6}{7.2}{\rmdefault}{\mddefault}{\updefault}{\color[rgb]{0,0,0}$M_n$}%
}}}}
\put(3421,-1861){\makebox(0,0)[lb]{\smash{{\SetFigFont{6}{7.2}{\rmdefault}{\mddefault}{\updefault}{\color[rgb]{0,0,0}$\rho$}%
}}}}
\put(1171,-1771){\makebox(0,0)[lb]{\smash{{\SetFigFont{6}{7.2}{\rmdefault}{\mddefault}{\updefault}{\color[rgb]{0,0,0}$S_{R-1}$}%
}}}}
\put(2791,-1816){\makebox(0,0)[lb]{\smash{{\SetFigFont{6}{7.2}{\rmdefault}{\mddefault}{\updefault}{\color[rgb]{0,0,0}$S_R$}%
}}}}
\put(4366,-1861){\makebox(0,0)[lb]{\smash{{\SetFigFont{6}{7.2}{\rmdefault}{\mddefault}{\updefault}{\color[rgb]{0,0,0}$k^{p(R)}$}%
}}}}
\put(5941,-1816){\makebox(0,0)[lb]{\smash{{\SetFigFont{6}{7.2}{\rmdefault}{\mddefault}{\updefault}{\color[rgb]{0,0,0}$k^{p(R+1)}$}%
}}}}
\end{picture}%

\end{center}

 Then we consider the representations 
$\mu_0,\dots,\mu_n,\rho,M_0,\dots,M_n$ of the quiver such that:
\begin{itemize}
\item The map $\mu_i$ is the multiplication by the variable $x_i$. 
\item The map $\rho$ is surjective
\item The images of the $M_i$ satisfy the condition $Im(M_0)+\dots
  +Im(M_n)=\kk^{\p(R+1)}$. 
\item $M_i \circ \rho\circ \mu_j = M_j \circ\rho \circ \mu_i$ for every $i,j\in
  \{0,\dots,n\}$. 
\end{itemize}

There is a functor    associated to these representations,  which is the functor of points $\underline{C^p}$  of a scheme  $C^p$.   There is an action of
$\gl{p(R)}\times \gl{p(R+1)}$ on  $C^p$ corresponding to the base
changes on the last two vertices of the quiver. Our description of
the Hilbert scheme is summarized in  
the following theorem.  
  
\begin{theoremA} 
 $C^p$ is a $\gl{p(R)}\times \gl{p(R+1)}$ principal bundle over
  the Hilbert scheme $\hilbScheme{\PP^n}{p}$. 
\end{theoremA}

The theorem provides a new universal property
for the Hilbert scheme: it is possible to describe locally a family
of subschemes of $\PP^n$  
using families of matrices from the quiver description, up to action 
of  a group. 
Describing schemes in terms of linear
algebra up to action may be more convenient
than the usual description in terms of polynomial ideals 
(see \cite[Prop. 3.14]{buloisEvain} for an explicit example).

Recall that 
Grassmannians are quotients of Stiefel varieties, 
and that Pl\"ucker coordinates are computable from Stiefel coordinates 
\cite{gelfand-kapranov-zelevinsky94:multidemensionalDeterminants}.
In our context, the ``Stiefel'' coordinates  on $C^p$  are the entries 
of the matrices $\rho,M_0,\dots,M_n$. In section \ref{sec:Plucker coordinates} we describe 
 the Pl\"ucker coordinates of the Hilbert scheme  in  terms of  these Stiefel
coordinates of $C_p$ (Proposition \ref{Plcoordinates+1}):
\begin{itemize} 
  \item the maximal minors of $\ro$ give  Pl\"ucker coordinates for the embedding  $\hilbScheme{\PP^n}\p \hookrightarrow\GrassScheme{S_{R}}{\p(R)}$;
  
 \item  the  maximal minors of $\sum_{i=0}^n ( M_i \circ\ro )\colon S_R ^{n+1} \rightarrow \kk^{p(R+1)}$  give Pl\"ucker coordinates for the embedding  $\hilbScheme{\PP^n}\p  \hookrightarrow  \GrassScheme{S_{R+1}}{\p(R+1)}$.  
\end{itemize}

The notations to formulate the main results about the equations for the Hilbert scheme are the following.  We consider exterior products of the type 
\begin{equation}\label{for:extprod}\ell z _1\wedge \dots \wedge  \ell z_{b } \wedge v_{b+1}\wedge \dots  \wedge  v_{\p(R+1)}\end{equation}
   where  $z_i\in S_{R},  v_j \in S_{R+1}$ are
monomials and $b\leq \p(R+1)+1$;  note that \eqref{for:extprod}  makes
sense only if  $b\leq \p(R+1)$, but by convention we set that they
are identically  zero for  $b=\p(R+1)+1$, so that the case $b=\p(R)+1$
makes sense also for a constant Hilbert polynomial.

If we chose as  $\ell$  a variable $x_i$, \eqref{for:extprod} corresponds to  a Pl\"ucker
coordinate on the Grassmannian $\GrassScheme{S_{R+1}}{\p(R+1)}$.
If $\ell$ is a linear form in $S_1$,  the multilinear expansion of \eqref{for:extprod} gives  a linear combination of  Pl\"ucker
coordinates.
If   $\ell$ is the  \lq\lq  generic\rq\rq\  linear form 
\noindent $L=y_0x_0+\dots+y_nx_n$  with indeterminate coefficients $y_i$, 
the multilinear expansion of \eqref{for:extprod} gives a  homogeneous polynomial   of degree $b$ in the variables $y_i$  and  linear combinations of  Pl\"ucker coordinates on  $\GrassScheme{S_{R+1}}{\p(R+1)}$ as coefficients. Let  $m=y_{i_1} \cdots y_{i_b} $ be any such monomial, 
 $\underline x$ be  the tuple      $(x_{i_1} , \dots , x_{i_b}) $ 
and    
$ \underline z$, $\underline v$ be  the tuples of monomials $z_i$, $v_j$.
We denote the linear combination of Pl\"ucker coordinates
 which  is  the coefficient of  $m $ by \noindent $F(\underline x,\underline z,\underline v)$   when $b=p(R)$ and by 
 $E(\underline x,\underline z,\underline v)$  when $b=\p(R)+1$.
 
 Since,  up to a sign, permutations on the lists $\underline x,\underline
z,\underline v$ do not  modify $E(\underline x,\underline
z,\underline v)$, we assume each list  ordered in increasing order  (for instance w.r.t. the lexicographic order).

\begin{theoremB} Let    $\p$ be  any  Hilbert polynomial of subschemes
  of $\PP^n$ and let $r$ be the Gotzmann number of $\p$. 
For  any     positive integer $R$, consider   the
  Pl\"ucker 
  embedding  $\GrassScheme{S_{R+1}}{\p(R+1)}\hookrightarrow \PP^{D(R+1)}$ and the following three sets of equations on $\PP^D$:
\begin{enumerate}[1)]
\item the quadratic Pl\"ucker relations of the Grassmannian,
\item the linear equations $E(\underline x,{\underline z},{\underline v}
  )=0$ ( non trivial only for a non-constant $p$ )
\item the quadratic equations \noindent $F(\underline x,\underline {z},\underline {v} )F(\underline x', \underline {z'},\underline {v'})-
 F(\underline x,\underline {z'},\underline {v})F(\underline x',\underline {z},\underline {v'})=0$
\end{enumerate}

  If   $R \geq r $,     then the image of the  embedding  $j_{R+1}\colon \hilbScheme{\PP^n}\p \hookrightarrow  \PP^{D(R+1)} $ is defined by the linear and quadratic equations above.
\end{theoremB}

We underline that our result does not apply to the minimal standard
embedding  $j_{r}$.   In fact, we prove that (except for few very
trivial cases) these equations considered in the case  $R=r-1$ 
define a subscheme of  $\PP^{D(r)}$  that properly contains the Hilbert scheme (Proposition \ref{semifinale}). Furthermore,  we explicitly present a  Hilbert scheme whose image under the  minimal embedding $j_r$ cannot be cut out by any set of equations of degree $\leq 2$  (Example \ref{nonbasta2}). 

 Even though this could appear as a striking reason to motivate the
 non-applicability of Theorem B to the embedding $j_r$, in the final
 section  we present  a different, deeper  motivation for this
 apparent failure,  showing that our equations  have an interesting
 meaning also for every $R\leq r$. 
We show that the problems are concentrated in points with high
regularity and that our equations define properly define the Hilbert scheme 
on the locus of points with adequate regularity.

More precisely, let us denote by $\hilbScheme{\PP^n}{p, [r']}$ the open subscheme of  $\hilbScheme{\PP^n}{p}$
 parametrising subschemes with Hilbert polynomial $p$ and regularity
 at most $r'$. It is proved in \cite{BBR}  that for every   $s\geq r'$
 there is a closed   embedding $j_{r',s}\colon \hilbScheme{\PP^n}{p,
   [r']}\hookrightarrow \PP^{D(s)} \setminus L^{r',s}_{p}$,  where
 $L^{r',s}_{p}$ is a suitable linear subspace of $\PP^{D(s)}$.
   
    In Theorem \ref{reglim} we complete Theorem B proving that: 
    \smallskip

   { \it   If   $r'$ is any positive integer  lower than or equal to $  r$, and   $s\geq r'+1$,  the image of $j_{r',s}\colon \hilbScheme{\PP^n}{p, [r']}\hookrightarrow \PP^{D(s)}\setminus L^{r',s}_{p}$   is given by the linear and quadratic equations 1),2),3)  of Theorem B.}

Note that the embedding $j_{r',s}$ is defined for every $s\geq r'$, but our equations define its image only if $s\geq r'+1$.
\ecr

\subsection*{Overview of the proof of Theorem B}
\label{sec:overview-proof}

The standard way to find  equations for $\hilbScheme{\PP^n}\p$ as a
subscheme of a Grassmannian that we can find in literature is the
following. One chooses a degree $R$ larger or equal to the Gotzmann
number of $\p$,   a subspace  $V\subset S_R$ of codimension $\p(R)$
and    looks at its \lq\lq expansion \rq\rq\ $ S_1V$ in the next  degree $R+1$. By Gotzmann's persistence   (Theorem \ref{macaulayMaximalGrowth}  \eqref{macaulayMaximalGrowth_1}) $V$ corresponds to a point of $\hilbScheme{\PP^n}\p$ if and only if the dimension of $S_1V$ is the minimum allowed by Macaulay's growth    (Theorem \ref{macaulayMaximalGrowth}  \eqref{macaulayMaximalGrowth_2}).  

\medskip

In this paper we follow a different approach, that in some sense goes
in the opposite direction. We consider a subspace  $W$ in $ S_{R+1}$
of codimension $\p(R+1)$  and look at the previous degree $R$.  For
subspaces $W$ corresponding to points of the Hilbert scheme,  $(W\colon S_1)$  has codimension   $\p(R)$ in $S_R$ and,   according to  our construction,  its  Pl\"ucker
coordinates    are maximal minors of a map
   $( M_0 \circ\ro,\dots,
 M_n\circ \rho)$  which is a composition (Proposition \ref{Plcoordinates+1}).

If   the 
dimension  of the space $F$ in the middle of a composition $E\to F \to
G$ is too small,  the maximal minors  vanish. In our context, 
this happens  if  in \eqref{for:extprod} we choose $b= \p(R)+1$. After some algebraic manipulation this leads to 
the linear equations   $E(\underline x,\underline z,\underline v)=0$.
  
   \medskip
   
  These   linear equations define a   subscheme $\mathbf E$ of the Grassmannian that contains the Hilbert scheme, but in general does not coincides with it.
In \S  \ref{sec:6} we  give  an intrinsic description   of $\mathbf E$ as the locus of points $W\in  \GrassScheme{S_{R+1}}{\p(R+1)}$  
 such that for $\ell$ general in $S_1$ the codimension of $(W\colon \ell)$ is $p(R)$,  the maximum  allowed by  Green's hyperplane
restriction  theorem   (Theorem \ref{macaulayMaximalGrowth}  \eqref{macaulayMaximalGrowth_3} and  Theorem \ref{prop:generalform1}).

 \medskip

Finally, in  \S  \ref{sec:equations-degree-twoRED} we prove that    $\hilbScheme{\PP^n}\p$  is the locus of points
$W \in \mathbf  E$  such that  for  $\ell $ general   $(W:\ell )$  does not depend on $\ell$, hence  it coincides with $(W:S_1)$  
(Theorem \ref{prop:generalform}).
We conclude the  proof of Theorem B  showing that the quadratic equations 
\noindent $F(\underline x,\underline {z},\underline {v} )F(\underline x', \underline {z'},\underline {v'})=
 F(\underline x,\underline {z'},\underline {v})F(\underline x',\underline {z},\underline {v'})$
 are fulfilled at a point $W $ in $\mathbf  E $ exactly when  for $\ell$  general   the \Pl\ coordinates   of $(W:\ell )$ in  $ \GrassScheme{S_{R}}{\p(R)}$   do not depend on $\ell$.

  \medskip

The proof we just outlined is developed in the course of the paper in a more general framework, not only for $\kk$-points but  for  families, so that the  equations we obtain define schematically the Hilbert scheme.

\bigskip

\thanks{ A workshop ``Components of Hilbert Schemes'' was organized by the American Institute of Mathematics
  from July 19 to July 23, 2010. This is the place where the authors
  met for the first time. We thank the institute and the organizers.
  
  We thank Steve Kleiman and Michel Brion for their useful comments.}

\section{Generalities  and Embeddings of the Hilbert scheme}
\label{sec:embedd-hilb-scheme}

In this section, after some general notation, we recall some of the classical material used to embed 
Hilbert schemes into Grassmannians. 

\medskip

  We work over a  field $\kk$ 
of any characteristic; in sections from 2 to 7 we  assume that it is algebraically closed;    in the final section we will prove that our equations are valid on any field.

\smallskip

  Let $S=\kk[x_0,\dots,x_n]$ and
  $S_A=A[x_0,\dots,x_n]$ for any $\kk$-algebra $A$.  We denote by
  $S_d$ and $S_{A,d}$ the free submodules of homogeneous polynomials  of degree 
  $d$ and by  $N(d)$ their dimension.  We denote by the same letter $\mhu{i}:S\fd S$ and $\mhu{i}:S_A\fd
  S_A$ the multiplication by the variable
  $x_i$. 

 When $A$ is a field, we  often consider the $\kk$-vector spaces $S_1\simeq \kk^{n+1}$
and    $S_{A,1}\simeq A^{n+1}$
as   topological spaces endowed with the Zariski topology. 
We  say that a property holds for a general linear form $\ell$ in $S_1$ (resp. $S_{A,1}$)  if it holds for every $\ell$ in a  non-empty Zariski open subset of  $S_1$ (resp. $S_{A,1}$).
We will use the following well known facts.

\begin{lemma}   \label{ZariskiDense}  Let $A$ be any  $\kk$-algebra
  ($k$ algebraically closed, but infinite is sufficient ).  
\begin{enumerate}
\item \label{ZariskiDense1}  If  $F\in A[y_1, \dots,  y_m ]$   vanishes on  a dense subset of  $\kk^m$, then it is  the null polynomial. 

\item \label{ZariskiDense2}   
 If $A$ is a field, the Zariski topology of $\kk^m$ is the subspace  topology induced by the Zariski topology of $A^m$.
\end{enumerate}
\end{lemma}
\begin{proof}
We prove that 
 the set of zeros $Z$ in $\kk^m$ of  a polynomial $F\in A[y_1, \dots, y_m]$  is also the set of common zeros of  suitable polynomials $G_i \in \kk[y_1, \dots, y_m]$.
If $a_1, \dots, a_v\in A$ are  the coefficients of $F$, it is sufficient to prove the result  assuming that $A$ is   the finitely generated $\kk$-algebra $\kk[a_1, \dots, a_v]$.  Then, by Noether's Normalization Lemma, $A=C[b_1, \dots, b_s]$ with $C:=\kk[T_1, \dots, T_r]$ polynomial ring in the indeterminates $T_i$ and $b_i$ integral over $C$.

Then, it is sufficient to prove the result for polynomials with coefficients in  $B[t]$ assuming that  the result holds for polynomials with coefficients in a $\kk$-algebra $B$ and that $t$ is either integral or transcendent over $B$. 
In both cases, the coefficients in $B[t]$ of a polynomial $F\in
B[t][y_1, \dots, y_m]$ are contained in some  free $B$-module of
finite rank $d$  with basis given by  powers of $t$: $B[t]$ itself, if
$t$ is integral,
the $B$ module generated by the powers of $t$ up to the maximum appearing in $F$, if   $t$ is transcendent.       Then $F=\sum_{i=0}^{d-1} G_i t^i$ with $G_i\in B[y_1, \dots, y_n] $ and  $Z$ is 
the set of  the common zeros of   the polynomials  $G_i$.

We have proved the second point. The first one is now an easy
induction on the number $m$ of variables. 
\end{proof}

For any $\kk$-algebra $A$ and $A$-module $W$ in $S_{A,d}$, we will denote by $S_{1}W  $ the $A$-submodule of $S_{A,d+1}$ 
generated by the images of the multiplication maps. Moreover, for every linear form  $\ell \in S_{1}$ we will denote by  $ (W:\ell)$ 
  the $A$-module   $\{f\in S_{A,d-1} \ | \ \ell f\in W\}$.

\smallskip

By simplicity for every tuple  $\underline w=(w_1,\dots, w_b)$  of  elements of an $A$-module $M$ we shortly write 
  $\wedge  \underline w$ to denote the element  $w_1\wedge \dots \wedge w_b$ in $\wedge^b M$. 

\smallskip
 The vector spaces $S_{R}$ and $S_{R+1}$ are considered
with their natural bases of monomials ordered in some way (for instance  lexicographically); we say that   tuple of monomials $(w_1,\dots, w_b)$ 
is ordered increasingly if $w_1\leq w_2 \leq \dots \leq w_b$.

\smallskip
 If  $\p$ is the Hilbert polynomial of a subscheme $Z\subset \PP^\n$,
the Gotzmann number of $\p$ is  the Castelnuovo-Mumford regularity of $\p$, i.e. the smallest integer $m$ such that 
every $Z \subset \PP^{\n}$ with Hilbert polynomial $\p$ is
$m$-regular \cite[Proposition  C.24]{iarrobino-kleiman:around_gotzmann_number}. In particular the Hilbert function $H_Z$ of $Z$ satisfies
$H_Z(d)=p(d)$   for  every $d\geq \rg$.
  Note that $\rg$ depends on  $\p$, but neither  on the ground field nor on $n$.

From now on, $\p$ will denote a Hilbert polynomial for subschemes of $\PP^\n$,   $r$  its Gotzmann number 
and $R$ any number $ \geq \rg$. Moreover, for every integer $t$ we will denote by $q(t)$ the number $N(t)-\p(t)$.

\begin{theorem}
\label{macaulayMaximalGrowth} 
  Assume that the  $\kk$-algebra $A$ is a  field.  Let  $W $ be a vector space in $S_{A,d}$ with  $\Codim_A( W,S_{A,d})=\p(d)$ and $d\geq r$.   Then, 
\begin{enumerate}
\item (Macaulay) \label{macaulayMaximalGrowth_1}  $\Codim_A( W  S_{A,1} ,S_{A,d+1})\leq \p(d+1)$.
\item (Gotzmann) \label{macaulayMaximalGrowth_2}  The equality $\Codim_A( W  S_{A,1} ,S_{A,d+1})= \p(d+1)$ holds  if and only if the Hilbert polynomial 
of the ideal generated by $W$ is $\p$. 
\item (Green)\label{macaulayMaximalGrowth_3}  If   $d\geq r+1$ and  $\ell $ is  general  in $ S_{1}$, then  $\Codim_A( (W:\ell),S_{A,d-1})\geq \p(d-1)$. 
\end{enumerate}
\end{theorem}
\begin{proof}
 \eqref{macaulayMaximalGrowth_1}  is  a consequence of  Macaulay's  theorem on the growth of the Hilbert functions and \eqref{macaulayMaximalGrowth_2}  is  
Gotzmann's persistence theorem \cite{gotzmann}. These results can be found in several research papers and books;  a version  very close to ours for  
notation and intent is that of \cite[Proposition 4.2]{haiman_sturmfels02:multigradedHilbertSchemes}. 

\medskip
  
      \eqref{macaulayMaximalGrowth_3}   follows by  Green's hyperplane
restriction theorem  proved in \cite[Theorem 1]{green:macaulayGotzmann};  in fact,  if $c=\p(d)$ and $d$ is larger than the Gotzmann number of $\p$, then the number  that in \cite{green:macaulayGotzmann} is  denoted as $c_{\langle d\rangle}$ coincides with $\p(d)-\p(d-1)$.  We also refer to   \cite[Theorem 4.2.12]{BrunsHerzog}) where
 it is clearly stated that this result only needs that the ground field is infinite.    Note  that   in the quoted paper the result is proved  for a general $\ell$ in $S_{A,1}$, 
hence  it holds  for a general $\ell$ in   in $S_1$ (Lemma
\ref{ZariskiDense} \eqref{ZariskiDense2}). 
\end{proof}

Exploiting Theorem \ref{macaulayMaximalGrowth}    the following result
 realizes $\hilbScheme{\PP^\n}{p}$ as a closed subscheme of the
   product of Grassmannians  $\GrassScheme{S_{R}}{p(R)}\times \GrassScheme
   {S_{R+1}}{p(R+1)}$  \cite[Bemerkung 3.2]{gotzmann},\cite[Proposition C.28, Theorem
   C.29]{iarrobino-kleiman:around_gotzmann_number}, \cite{haiman_sturmfels02:multigradedHilbertSchemes}, \cite[Exercise VI-3]{EisenbudHarris}.

Following \cite{haiman_sturmfels02:multigradedHilbertSchemes}, 
We will denote by $\underline Y$ the functor of points of a scheme $Y$
from $\kk$-algebras to sets.

\begin{theorem} \label{constructionHilbertDegreeRRPlusOne}
 The Hilbert scheme  $\hilbScheme{\PP^\n}{p}$ is the subscheme of   $\GrassScheme{S_{R}}{p(R)}\times \GrassScheme
   {S_{R+1}}{p(R+1)}$ whose functor of points from $k$-algebras to sets  is  given by 
   
  $\hilbSchemeFunctor{\PP^n}{p}(A)=\{(I_{A,R},I_{A,R+1})\subset  (
  S_{A,R},  S_{A,R+1})$ that satisfy the  following conditions $\}$
  \begin{itemize}
   \item $ S_{A,R}/I_{A,R}$ is locally free of rank $\p(R)$
   \item    $ S_{A,R+1}/I_{A,R+1}$ is   locally free of rank  $\p(R+1)$
   \item  $x_i I_{A,R}\subset I_{A,R+1}$ for each variable $x_i $. 
   \end{itemize}

Moreover, the first  and  second projections give the  embeddings   $j_R \colon  \hilbScheme{\PP^\n}{p} \hookrightarrow \GrassScheme{S_{R}}{p(R)}   $  and  $j_{R+1}\colon  \hilbScheme{\PP^\n}{p}\hookrightarrow  \GrassScheme{S_{R+1}}{p(R+1)}   $. 
\end{theorem}

\section{A new  description    of the Hilbert scheme}
\label{sec:constr-scheme-c}

\begin{notation}\label{not2}
  If $\phi_j:E\fd F$, for  $j=0, \dots, n$,    are  morphisms of
  $A$-modules and $ B$ is an $A$-algebra,
  we will use the following notations  
\begin{itemize} 
\item $\phi_j \otimes_A B:E\otimes_A B \fd F\otimes_A B$ is  the
  morphism of modules with $(\phi_j\otimes_AB)(e\otimes
  b)=\phi_j(e)\otimes b$,
\item $\phi $ is  the list    $(\phi_0, \dots, \phi_n)$ ,
\item $\oplus \phi $ is the morphism $ E\oplus \dots \oplus E \fd F \oplus
  \dots \oplus F $   given by    $\oplus  \phi(e_0, \dots, e_n)=(\phi_0(e_0), 
\dots, \phi_i(e_n))$,
\item   $  \Sigma \phi$ is the morphism $ E\oplus \dots \oplus E \fd F  $   given by   $\Sigma  \phi(e_0, \dots, e_n)=\Sigma_{j=0}^n\phi_j(e_j) $.
\end{itemize}
\end{notation}

\begin{remark}\label{condiagrammaH}
By the functorial description of the Hilbert scheme given in Theorem \ref{constructionHilbertDegreeRRPlusOne},  every map $\spec{A}\rightarrow 
\hilbScheme{\PP^\n}{p}$ corresponds to a  commutative  diagram with exact rows.


  \begin{equation}\label{diagrammaH}
    \begin{array}{cccccccccc}
     &&& & (S_{A,R-1})^{\n+1} & & & & \\
                     & &&& \big\downarrow {(  \oplus{\mu} )_{R-1}} & & & & & \\
0&\rightarrow&  (I_{A,R})^{\n+1} &\hookrightarrow  & (S_{A,R})^{\n+1} &
& {\stackrel{ \pi_R \oplus \dots \oplus \pi_R  }\longrightarrow} &
(S_{A,R}/I_{A,R})^{\n+1}  &\fd & 0 \\
    &&  \big\downarrow (\Sigma\mu)_{R,I} & & \big\downarrow (\Sigma\mu)_R && &
      \big\downarrow (\Sigma  \overline{\mu})_R && \\
 0&\rightarrow &I_{A,R+1} & \hookrightarrow & S_{A,R+1} & &   {\stackrel{ \pi_{R+1}}\longrightarrow}  &
S_{A,R+1}/I_{A,R+1} &\fd & 0   \\ 
    \end{array}
    \end{equation}
  where  $ \mu$ is the list $(\mhu{0}, \dots, \mhu{n})$ with
  $\mhu{i}:S_A\rightarrow S_A$  the multiplication by the variable
  $x_i$, $\pi_R$, $\pi_{R+1}$ are the projections on the quotients,
  $\overline \mu=(\overline \mu_{0}, \dots, \overline \mu_{n})$ is the
  list of quotient maps, 
  $\Sigma \mu$ and $\Sigma \overline \mu$ are defined by notation
  \ref{not2},  and  $(\Sigma \mu)_{R}$,$(\Sigma \mu)_{R,I}$,  $(\Sigma
  \overline \mu)_{R}$ are defined in the natural way by restrictions of
  $\Sigma \mu$ and $\Sigma \overline \mu$, the indices keeping track
  of the domain and codomain. 
\end{remark}

To build the variety $C^p$ above the Hilbert scheme we elaborate on the above diagram and  construct a  functor of representations of the   quiver $Q_p$ of the introduction

\begin{center}
\begin{picture}(0,0)%
\includegraphics{quiver.pdf}%
\end{picture}%
\setlength{\unitlength}{2072sp}%
\begingroup\makeatletter\ifx\SetFigFont\undefined%
\gdef\SetFigFont#1#2#3#4#5{%
  \reset@font\fontsize{#1}{#2pt}%
  \fontfamily{#3}\fontseries{#4}\fontshape{#5}%
  \selectfont}%
\fi\endgroup%
\begin{picture}(5070,1050)(1156,-2506)
\put(1801,-2446){\makebox(0,0)[lb]{\smash{{\SetFigFont{6}{7.2}{\rmdefault}{\mddefault}{\updefault}{\color[rgb]{0,0,0}$\mu_n$}%
}}}}
\put(1846,-1636){\makebox(0,0)[lb]{\smash{{\SetFigFont{6}{7.2}{\rmdefault}{\mddefault}{\updefault}{\color[rgb]{0,0,0}$\mu_0$}%
}}}}
\put(4861,-1591){\makebox(0,0)[lb]{\smash{{\SetFigFont{6}{7.2}{\rmdefault}{\mddefault}{\updefault}{\color[rgb]{0,0,0}$M_0$}%
}}}}
\put(4861,-2446){\makebox(0,0)[lb]{\smash{{\SetFigFont{6}{7.2}{\rmdefault}{\mddefault}{\updefault}{\color[rgb]{0,0,0}$M_n$}%
}}}}
\put(3421,-1861){\makebox(0,0)[lb]{\smash{{\SetFigFont{6}{7.2}{\rmdefault}{\mddefault}{\updefault}{\color[rgb]{0,0,0}$\rho$}%
}}}}
\put(1171,-1771){\makebox(0,0)[lb]{\smash{{\SetFigFont{6}{7.2}{\rmdefault}{\mddefault}{\updefault}{\color[rgb]{0,0,0}$S_{R-1}$}%
}}}}
\put(2791,-1816){\makebox(0,0)[lb]{\smash{{\SetFigFont{6}{7.2}{\rmdefault}{\mddefault}{\updefault}{\color[rgb]{0,0,0}$S_R$}%
}}}}
\put(4366,-1861){\makebox(0,0)[lb]{\smash{{\SetFigFont{6}{7.2}{\rmdefault}{\mddefault}{\updefault}{\color[rgb]{0,0,0}$k^{p(R)}$}%
}}}}
\put(5941,-1816){\makebox(0,0)[lb]{\smash{{\SetFigFont{6}{7.2}{\rmdefault}{\mddefault}{\updefault}{\color[rgb]{0,0,0}$k^{p(R+1)}$}%
}}}}
\end{picture}%
\end{center}

\begin{definition} \label{def:cp-functorial}
  Let  $A$ be a  $\kk$-algebra.  Let $\mathfrak C^p(A)=\{(\mu ,\rho,\M{})\}$ 
  where:
 \begin{itemize}
 \item $\mhu{}=(\mhu{0}, \dots, \mhu{n})$ and $\mhu{i}:S_{A,R-1}\fd
  S_{A,{R}}$ is the multiplication by the variable
  $x_i$,
 \item $\M{}=(\M{0}, \dots, \M{n})$ and  $\M{i}:A^{\p(R)}\fd
  A^{\p(R+1)}$ is a morphism of $A$-modules,
  \item  $\Sigma\M{}:(A^{\p(R)})^{\n+1}\fd
  A^{\p(R+1)}$   is surjective
\item $\ro:  S_{A,R} \fd A^{\p(R)}$ is a surjective
morphism of $A$-modules,
\item for every pair $(i,j)\in \{0,\dots,n\}$,  $\M{i}\circ \ro \circ
  \mhu{j}=\M{j}\circ \ro \circ
   \mhu{i}$
  \end{itemize}
\begin{displaymath}
 S_{A,R-1}\stackrel{\mhu{i}}{\fd} S_{A,R}  \stackrel{\rho}{\fd}
  A^{\p(R)} \stackrel{\M{j}} {\fd} A^{\p(R+1)}.
\end{displaymath}

Since the tensorisation preserves the surjectivity, for any map of
$\kk$-algebras $A\fd B$, we have a morphism $\mathfrak C^p(A)\fd
\mathfrak C^p(B)$ which sends $(\mu, \rho,  M)$ to $(\mu\otimes_A
B,\rho \otimes_A
B, M\otimes_A B)$. This makes $\mathfrak C^p$ a functor from the category of
$\kk$-algebras to the category of sets. 
\end{definition}

\begin{remark}
  The set $\mathfrak C^p (A)$ and the map $\mhu{}$ depend on $R$, but for brevity, $R$
  is not included in our notation. Similarly, we will use the notation 
  $\mathfrak C^p(A)=\{(\rho, M)\}$ as a shortcut for
  $\mathfrak C^p(A)=\{(\mhu,\rho, M)\}$ 
since there is only one possible choice for
  $\mhu{}$. 
\end{remark}

\begin{proposition} 
  There exists a scheme $C^p$ such that:
  \begin{listecompacte}
  \item $\mathfrak C^p(A)=\underline C^p$, namely $\mathfrak C^p(A)=Hom(\spec{A},C^p)$ for every $k$-algebra $A$;
  \item the $\kk$-points of $C^p$ are representations of the quiver $Q_p$. 
  \end{listecompacte}
\end{proposition}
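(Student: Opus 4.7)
The plan is to represent $\h{\Cp}$ as a locally closed subscheme of an affine space, by cutting out the commutativity relations as closed polynomial conditions and the surjectivity requirements as open conditions.

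First, I would fix bases in order to get matrix coordinates. The monomial basis of $\poly_\rgg$ induces, for every $\kk$-algebra $\alga$, an isomorphism $\poly_{\alga,\rgg}\simeq\alga^{\binom{\rgg+\n}{\n}}$; so $\ro$ is encoded by a $\p(\rgg)\times\binom{\rgg+\n}{\n}$ matrix and each $\M{i}$ by a $\p(\rgg+1)\times\p(\rgg)$ matrix with entries in $\alga$. Collecting all these entries defines a natural transformation from $\h{\Cp}$ to the functor of points of $\AA^N$ with $N=\p(\rgg)\binom{\rgg+\n}{\n}+(\n+1)\p(\rgg+1)\p(\rgg)$, exhibiting $\h{\Cp}$ as a subfunctor of the representable functor $\mathrm{Hom}(\spec(-),\AA^N)$.

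Next, I would show that this subfunctor is the intersection of a closed and two open subfunctors. The commutativity conditions $\M{i}\circ\ro\circ\mhu{j}=\M{j}\circ\ro\circ\mhu{i}$ translate into polynomial (in fact bilinear) equations in the matrix entries, because the $\mhu{i}$ are fixed multiplications by the variables $x_i$. These equations cut out a closed subscheme $V\subset\AA^N$ representing the subfunctor defined by commutativity alone. For the surjectivity, one uses the standard fact that a map $\alga^m\to\alga^n$ of free modules is surjective iff the ideal generated by its $n\times n$ minors is the unit ideal of $\alga$. Consequently the locus where $\ro$ is surjective is the union $U_\ro\subset V$ of principal opens on which a single $\p(\rgg)\times\p(\rgg)$ minor of $\ro$ is invertible, and similarly $U_{\agm}\subset V$ for the concatenated matrix $[\M{0}\mid\dots\mid\M{\n}]$ representing $\Sigma\M{}$. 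I would then set $\Cp:=V\cap U_\ro\cap U_{\agm}$, a locally closed subscheme of $\AA^N$. A routine Yoneda-type verification shows that morphisms $\spec\alga\to\Cp$ correspond bijectively to elements of $\h{\Cp}(\alga)$.

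The second bullet point is then immediate: a $\kk$-point of $\Cp$ is an element of $\h{\Cp}(\kk)$, i.e.\ a choice of linear maps $\mhu{i},\ro,\M{i}$ between the fixed vector spaces $\poly_{\rgg-1},\poly_\rgg,\kk^{\p(\rgg)},\kk^{\p(\rgg+1)}$ placed at the four vertices of $\Q_\p$, which is exactly a representation of the quiver (subject to the additional commutativity and surjectivity constraints). The only mildly subtle point I anticipate is verifying that the surjectivity conditions define open subfunctors rather than merely topologically open loci; this amounts to checking that surjectivity of a morphism of free modules is stable under arbitrary base change, a standard consequence of the minor criterion invoked above. No serious obstacle is expected.
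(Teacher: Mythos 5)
Your proof is correct and follows essentially the same route as the paper: both realize $\Cp$ as a locally closed subscheme of an affine space of matrices, with the commutativity relations cutting out a closed subfunctor and the surjectivity of $\ro$ and $\Sigma\M{}$ cutting out open subfunctors. The paper merely packages this more abstractly, extending the functor to all $k$-schemes and invoking Grothendieck's relative representability lemma together with the EGA argument for openness of the surjectivity locus, where you work directly with matrix entries and the maximal-minor criterion.
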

\begin{proof}
The 
non-trivial fact is the first item. It follows immediately that 
the $k$-points are representations of $Q_p$.

Let $\widetilde {\mathfrak C^p}$ be the extension of  $\mathfrak C^p$ to
the category of $k$-schemes, i.e. $\widetilde {\mathfrak C^p}(Z)=\{(\mu ,\rho,\M{})\}$ 
  where:
 \begin{listecompacte}
 \item $\mhu{}=(\mhu{0}, \dots, \mhu{n})$ and
   $\mhu{i}:S_{R-1}\otimes \mathcal O_Z \fd
  S_{R}\otimes \mathcal O_Z$ is the multiplication by the variable
  $x_i$,
 \item $\M{}=(\M{0}, \dots, \M{n})$ and  $\M{i}:\mathcal O_Z^{p(R)}\fd
  \mathcal O_Z^{p(R+1)}$ is a morphism of $\mathcal O_Z$-modules,
  \item  $\Sigma\M{}:(\mathcal O_Z^{p(R)})^{\n+1}\fd
  \mathcal O_Z^{p(R+1)}$   is surjective
\item $\ro:  S_{R}\otimes \mathcal O_Z \fd \mathcal O_Z^{p(R)}$ is a surjective
morphism of $\mathcal O_Z$-modules,
\item for every pair $(i,j)\in \{0,\dots,n\}$,  $\M{i}\circ \ro \circ
  \mhu{j}=\M{j}\circ \ro \circ
   \mhu{i}$. 
  \end{listecompacte}
It suffices to prove that $\widetilde {\mathfrak C^p}$ is representable to 
obtain the first item of the proposition. 

  Consider the functor $\mathcal V$ from schemes to sets defined as follows.  
If $Z$ is a  $\kk$-scheme, an
  element of $\mathcal V(Z)$ is a couple $(\rho , M)$ where:
  \begin{listecompacte}
  \item $\Sigma \M{} \colon (\mathcal O_Z^{\p(R)})^{\n+1}\fd
  \mathcal O_Z^{p(R+1)}$ is a  morphism of $\mathcal O_Z$-modules,
\item $\ro:   S_{R} \otimes \mathcal O_Z \fd \mathcal O_Z^{p(R)}$ is a 
(possibly not surjective) morphism of $\mathcal O_Z$-modules.
  \end{listecompacte}
For any map of
$\kk$-schemes $\phi:Z_2\fd Z_1$, we have a morphism $\mathcal V({Z_1})\fd
\mathcal V({Z_2})$ which sends $(\rho , M)$ to $(\phi^*\rho ,\phi^*M)$.

For any  finite dimensional 
$k$-vector space $V$, let us denote by $t(V)$ the scheme $\spec{Sym(V^*)}$ and consider the  functor $\underline{t(V)}$ given by 
 $$\underline{t(V)}(Z)=\Hom(Z,\spec{Sym(V^*)})\simeq  \Hom(Sym(V^*), H^0(\mathcal O_Z)) \simeq  H^0(\mathcal O_Z) \otimes_k V$$
  and, for any map of
$\kk$-schemes $Z_2 \fd Z_1$, the map $\underline{ t(V)
}(Z_1)\rightarrow \underline{ t(V) }(Z_2)$
sends $H^0(\mathcal O_{Z_1}) \otimes_\kk V  $ to $H^0(\mathcal O_{Z_2})\otimes_\kk
 V  $  by pullback. 
The above functor $\mathcal V$ is a special case of the functor $\underline{t(V)}$ with 
$V=\Hom((\kk^{\p(R)})^{\n+1},
  \kk^{\p(R+1)}) \oplus \Hom(S_R, \kk^{\p(R)})$.

 We recall  the  notion of relative
 representability from \cite{grothendieck:formalismeFoncteursRepresentables}.  
Let \noindent $F,G$ be functors from the category of $\kk$-schemes 
to sets. Suppose that \noindent $F$ is a subfunctor of \noindent $G$. The inclusion
\noindent $F\subset G$ 
is relatively representable if, for every  $k$-scheme $Z$  and every  morphism of functors $\underline Z\fd G$, the fiber product $\underline Z \times_G
F$ is representable.  Grothendieck,  \cite[Lemme 3.6]{grothendieck:formalismeFoncteursRepresentables} proves that if \noindent $G$ is
representable and if \noindent $F\subset G$ is relatively representable, then 
\noindent $F$ is representable.

In our case,  
 $\widetilde {\mathfrak C^p}$ is a  subfunctor of the representable functor $\mathcal V$  and it is defined by the surjectivity of $\Sigma M$
 and $\rho$, and  by the equality $\M{i}\circ \ro \circ
 \mhu{j}=\M{j}\circ \ro \circ
 \mhu{i}$. Thus it suffices to 
prove  that a subfunctor defined by 
the surjectivity of a morphism of locally free sheaves is relatively representable, 
and that a subfunctor defined by the equality of morphisms of
locally free sheaves is relatively representable. 

The locus in $\spec{A}$ where two matrices $M,N \in \Hom(\spec{A},\kk^{pq})$ of size $p\times q$
with coefficients $m_{ij},n_{ij}$ in $A$ coincide is closed.
More precisely, if $\spec B \fd
  \spec A$ is a morphism, then the pullback matrices $M_B,N_B \in
  \Hom(\spec B,\kk^{pq})$ satisfy $M_B=N_B$ if and only if the morphism  $\spec B \fd
  \spec A$ factorizes through the closed subscheme $Z=\spec {A/J}$ where
  the ideal $J$ is generated by the elements $(m_{ij}-n_{ij})$. 
  It follows that if  $\mathcal F, \mathcal G$ are locally free sheaves on $Z$, and
  if \noindent $F,g\in Hom_{\mathcal O_Z}(\mathcal F, \mathcal G)$ are two morphisms of sheaves, 
there exists a closed subscheme $i_W:W\fd Z$,  such that for all 
$\phi:Y\fd Z$, $\phi^*f=\phi^*g$ iff $\phi$ factorizes through $W$.

Let \noindent $G$ be a functor such that \noindent $G(Z)=\{(f,g,...)\}$, i.e. \noindent $G(Z)$ is a
tuple, and two components \noindent $ f ,g$ of this tuple
correspond to a functorial morphism of
locally free sheaves $\mathcal F_Z \fd \mathcal G_Z$ above $Z$.  Let \noindent $F$ be
the subfunctor of \noindent $G$ defined by the condition \noindent
$ f =g$.  By Yoneda, 
a morphism $\underline Z\fd G$ is defined by an element in \noindent $G(Z)$.  By the above, 
$\underline Z(Y) \times_{G(Y)} F(Y) $ can be identified with
$\underline W(Y)$, where $W$ is the closed subscheme of $Z$ defined by
the condition $f=g$.  Thus $\underline Z \times _G F\simeq \underline W$ and \noindent $F\subset G$ is a relatively representable functor. 
It follows that 
  the condition $\M{i}\circ \ro \circ
  \mhu{j}=\M{j}\circ \ro \circ
  \mhu{i}$ defines a relatively representable (closed) subfunctor of $\mathcal V$.

The fact that the surjectivity condition on a morphism of sheaves  
defines an open subfunctor is a classical argument used in the
construction of the Grassmannians  \cite[Lemme 9.7.4.6]
{ega1Springer}.
Thus $\widetilde{\mathfrak C^p}$ is representable as it is a locally closed
  subfunctor of the representable functor $\mathcal V$. 
\end{proof}

Our next goal is to prove that the Hilbert scheme     is a  quotient
of $C^p$. We first explain how  $C^p$ is related to the description  of $\hilbScheme{\PP^\n}{p}$ given in Theorem \ref{constructionHilbertDegreeRRPlusOne}. We always refer to  Notation \ref{not2}.

\begin{proposition}\label{diagramma} For every $k$-algebra $A$ and $(\ro, M) \in \underline{C^p}(A)$, let  $I_{A,R}:=\Ker(\ro)$,   
$ I_{A,R+1}:=\Sigma \mu(  I_{A,R}^{n+1} )$ and $\Sigma
\mu_{ R,I  }$
 the restriction of $\Sigma \mu$ to  $(I_{A,R})^{n+1} \rightarrow I_{A,R+1}$. 
 
 Then, there is  a morphism  $\beta\colon   S_{A,R+1}  \rightarrow  A^{\p(R+1)} $  such that $I_{A,R+1}=\Ker (\beta)$ and the following  
diagram is  commutative with exact rows:
  \begin{equation}\label{eq:3.2}
    \begin{array}{cccccccccc}
                     &&& & (S_{A,R-1})^{\n+1} & & & & \\
                     & &&& \big\downarrow {\oplus{\mu}} & & & & & \\
0&\rightarrow&  (I_{A,R})^{\n+1} &\hookrightarrow  & (S_{A,R})^{\n+1} &
&\stackrel{\rho\oplus  \dots \oplus \rho  }{\longrightarrow} &
(A^{\p(R)})^{\n+1} &\fd & 0 \\
    &&  \big\downarrow \Sigma\mu_{R,I} & & \big\downarrow \Sigma\mu_R && &
      \big\downarrow \Sigma  M && \\
 0&\rightarrow &I_{A,R+1} & \hookrightarrow & S_{A,R+1} & & \stackrel{\beta}{\rightarrow} &
 A^{\p(R+1)} &\fd & 0   \\ 
    \end{array}.
 \end{equation}
 Moreover,    $(I_{A,R}, I_{A,R+1}) \in \hilbSchemeFunctor{\PP^n}{p}(A)$.
  \end{proposition}
\begin{proof}
We observe that 
\begin{itemize}
\item  $ \rho\oplus\dots \oplus \rho $,  $\Sigma\mhu{R,I}$, $\Sigma \mu_R$ and $\Sigma M$   are  surjective by hypotheses and/or by construction; 
\item by construction the first row   is exact and the square on the left commutes.
\end{itemize}

 We use all these properties in order to define $\beta$  so that also
 the last line is exact and the right square commutes.

We define $\beta$  by diagram chasing in the following way:  by the surjectivity of  $ \Sigma \mu $ every element of  $S_{A,R+1} $ can be 
written (not uniquely) as $\Sigma x_i f_i$ where   \noindent $F:=(f_0,
\dots ,f_n) \in (S_{A,R} )^{n+1}$;  then  we set $ \beta(\Sigma x_i
f_i )= \Sigma M (\rho\oplus\dots \oplus \rho  (f))$.

To verify  that $\beta$  is well defined  it is sufficient to prove
that  when  $\sum x_i f_i=0$  we have $ \Sigma M ( \rho\oplus\dots \oplus \rho (f))=0 $.

 This is obvious if  \noindent $F=( 0,\dots, ,0,f_n)$ , since  
 $\sum x_i f_i=0$ implies  \noindent $ f_n =0$.   Then, we prove the assertion
 for   \noindent $F=( 0 \dots, 0,f_{j-1}, \dots, f_n)$ assuming it holds  for elements of
 the form $( 0 \dots, 0,f_{j}, \dots, f_n)$ 

For  every $ i=j, \dots, n$ we  set $f_i = x_{j-1} f_i'+
f_i''$  with \noindent $ f_i'  \in S_{A,R-1}$ and  $x_{j-1 }$
not appearing in  \noindent $ f_i'' $. The equality 
 $\sum_{i=0}^n x_i f_i=0$ implies \noindent $f_{j-1} +\sum_{i=j}^n  x_i f_i'=0 $ and  $\sum_{i=j}^n x_i f_i''=0 $.  
Then 
$$ \Sigma M (  \rho\oplus\dots \oplus \rho  (f))=\sum_{i=j-1}^{n} M_i ( \rho(f_i))= M_{j-1} ( \rho( f_{j-1})) +\sum_{i=j}^{n} M_i ( \rho(\mu_{j-1}( f_i')))+\sum_{i=j}^{n} M_i ( \rho(f_i'')). $$
The last summand is  equal to $ \Sigma M( \rho\oplus\dots \oplus \rho ((0, \dots, 0 ,f_j'',
\dots, f_n'')))$, hence it vanishes by the inductive assumption.  
Moreover, by the compatibility conditions,  we have 

$ M_i ( \rho(\mhu{j-1}( f_i')))= M_{j-1} ( \rho(\mhu{i}( f_i')))=M_{j-1} ( \rho(x_i f_i'))$.
Therefore $ \Sigma M ( \rho\oplus\dots \oplus \rho  (f))= M_{j-1}(\rho( f_{j-1}+
\sum_{i=j}^{n} x_i f_i') )= M_{j-1}( \rho( 0 ))=0.$ 

The commutativity of the right   square  holds by construction
of $\beta$ and the surjectivity of $\beta$ follows from that of 
 $\rho\oplus\dots \oplus \rho  $, and $ \Sigma  M $.

To complete the construction of our diagram, we now prove
that $ \Ker(\beta)$ is equal to $I_{A,R+1}$. By the commutativity
of the two squares and  the surjectivity of 
$\Sigma\mhu{R,I}$ it follows that $ I_{A,R+1}$ is contained in
$\Ker(\beta)$. 
To prove  the reverse inclusion we observe   that  $ I_{A,R}, I_{A,R+1} ,\Ker(\beta) $
depend functorially on $A$ in the sense that if $A \fd B$ is a
morphism of $\kk$-algebras, if \noindent $L_A\in
\{I_{A,R},\Ker(\beta)\}$ is one of these  two  $A$-modules,  then
\noindent $L_B=L_A\otimes_AB$,  and $I_{B,R+1}$ is the image of
$I_{A,R+1}$ in
$S_{A,R+1}\otimes B$. 
Then, we may check that for each maximal ideal
$\mathfrak m$, $(\Ker(\beta)/I_{A,R+1}))\otimes_A A_{\mathfrak m}=0$.
In other words, we may replace $A$ with $A_\mathfrak m$ and 
suppose that $A$ is local  with maximal ideal
$\mathfrak m$. 

The $A$-module $\Ker(\beta)$ is finitely
generated as a kernel of  a map between finitely generated  free
modules (\cite[Exercise 12, p.32]{atiyahMacdonal:introductionToCommutativeAlgebra}).
Thus $\Ker(\beta)/ I_{A,R+1}$ is finitely generated and, by Nakayama,  
we may even suppose that $A$ is a field. 
When $A$ is a field, the inclusion $  I_{A,R+1} \subset \Ker(\beta)$ is an
equality if $  \Dim\ I_{A,R+1} \geq \Dim\ \Ker(\beta) $ as vector spaces.  Since $\Codim(I_{A,R},S_R)  =\p(R)$,
Macaulay's maximal growth theorem
(Theorem~\ref{macaulayMaximalGrowth} \eqref{macaulayMaximalGrowth_3})  gives the inequality  
$\Codim(I_{A,R+1} ,S_{A,R+1})  
{\leq }\p(R+1)=\Codim(\Ker(\beta),S_{A,R+1})$. 

The final assertion directly follows from   Theorem \ref{constructionHilbertDegreeRRPlusOne}.
\end{proof}

\medskip

Now we are ready to conclude the proof of  the first  main result of the paper.

\begin{proof}  of {\bf Theorem A.}
Our goal is to prove that the Hilbert scheme  $\hilbScheme{\PP^\n}{p}$   is a  quotient
of $C^p$ by  a  natural action of $\gl{\p(R)}\times\gl{\p(R+1)}$. We start with the construction of a morphism $\chi :C^p\fd \hilbScheme{\PP^\n}{p}$. 
  We  consider the description of  $\hilbScheme{\PP^\n}{p}$  given  in Theorem \ref{constructionHilbertDegreeRRPlusOne}.

\medskip

{\bf Claim 1} There exists a surjective morphism $\chi:C^p\fd \hilbScheme{\PP^\n}{p}$. 

\medskip

Making reference to Proposition \ref{diagramma}, we define $\chi$ at the functorial level  by setting  $\chi((\ro, M))=( I_{A,R}=\Ker(\rho) ,  I_{A,R+1}=\Ker(\beta)) \in \hilbSchemeFunctor{\PP^\n}{p}(A)$, 
for every $\kk$-algebra $A$ and  $(\ro,M) \in \underline{C^p}(A)$. By  construction this  association depends functorially on $A$: 
indeed the exactness and commutativity of \eqref{eq:3.2} is preserved by tensorisation since the modules on the right are free.

   \medskip   
   We observe that the difference between the diagram
   \eqref{diagrammaH} associated with the Hilbert scheme and the
   diagram \eqref{eq:3.2} associated with $C^p$ comes from 
   identifications $\tilde\rho:S_{A,R}/I_{A,R}\to A^{\p(R)}$ and
   $\tilde\beta:S_{A,R+1}/I_{A,R+1}\to A^{\p(R+1)}$ such that   $\rho=\tilde\rho\circ \pi_R$
   and $\beta=\tilde\beta \circ \pi_{R+1}$. Starting from
   $\eqref{eq:3.2}$, the isomorphisms $\tilde\rho,\tilde\beta $ are
   obtained by factorization. Starting from \eqref{diagrammaH} and the
   isomorphisms  $\tilde\rho,\tilde\beta $, we will see that it is
   possible to form
 the following diagram  that includes
both \eqref{diagrammaH} and \eqref{eq:3.2}.
 \begin{equation}\label{diagram4}
    \begin{array}{cccccccccccc}
                     &&& & (S_{A,R-1})^{\n+1} & & & & & \\
                     & &&& \big\downarrow {\oplus{\mu}} & & & & & \\
0&\rightarrow&  (I_{A,R})^{\n+1} &\hookrightarrow  & (S_{A,R})^{\n+1} 
&\stackrel{ \pi_R \oplus\dots \oplus \pi_R}{\rightarrow}
                                                  &(S_{A,R}/I_{A,R})^{\n+1}&\stackrel{  \tilde\rho\oplus \dots \oplus\tilde\rho}{\rightarrow}&
(A^{\p(R)})^{\n+1} &\fd & 0 \\
    &&  \big\downarrow \Sigma\mu_{R,I} & & \big\downarrow \Sigma\mu_R && 
      \big\downarrow \Sigma    \overline{\mu}_R  &&  \big\downarrow \Sigma M &\\
 0&\rightarrow &I_{A,R+1} & \hookrightarrow & S_{A,R+1} &  \stackrel{\pi_{R+1}}{\rightarrow} &S_{A,R+1}/I_{A,R+1}&\stackrel{\tilde\beta}{\rightarrow} &
 A^{\p(R+1)} &\fd & 0   \\ 
    \end{array}
 \end{equation}

  It remains to prove that $\chi$ is surjective.  Let   $ \phi:
  U=\spec{A} \hookrightarrow  \hilbScheme{\PP^\n}{\p}$ be an open
  embedding such that 
    the quotients   $ S_{A,R}/I_{A,R}$
   and $ S_{A,R+1}/I_{A, R+1}$ are free of rank $p(R)$ and $p(R+1)$ respectively,  and consider the restriction of $\chi_{|_U} \colon \chi^{-1}(U) \fd U $. 
    We check the surjectivity  of $\chi_{|_U}$.  

    Let us  choose isomorphisms $\tilde\rho:S_{A,R}/I_{A,R}\fd
  A^{\p(R)}$ and $\tilde\beta: 
  S_{A,R+1}/I_{A,R+1} \fd A^{\p(R+1)}$ and   their lifts  $\rho$ and $\beta$ to  $S_{A,R}$ and $S_{A,R+1}$.  We obtain a diagram as in
\eqref{diagram4} provided we can define a vertical map $\Sigma M$ 
that fulfills all the commutativity conditions required. 
  
 For this we let   $  M_i:=\tilde{\beta}\circ  \overline{\mu}_{i,R} \circ \tilde\rho^{-1} \colon A^{\p(R)}\rightarrow A^{\p(R+1)}$, for  $i=0, \dots, n$. 
By the commutativity of the second square and that of the multiplication by two variables $x_i,x_j$ we get
$$ M_{j}\circ \rho\circ  \mu_{i }=\tilde\beta\circ \pi_{R+1}\circ \mu_{j,R}\circ \mu_i=\tilde\beta\circ \pi_{R+1}\circ \mu_{i,R}\circ \mu_j =M_{i}\circ \rho\circ  \mu_{j}.$$

Therefore,  $(\rho, M)\in
\underline{C^p}(A)$, i.e. it defines a map $\alpha \colon \spec{A} \fd  C^p $ such that   $\chi\circ \alpha=\phi$, as   directly follows by the definition of $\chi$.

\medskip

As a consequence of the above, we observe that   a pair $(\rho, M)\in
\underline{C^p}(A)$,  is completely determined by the pair $(\rho,
\beta)$ of \eqref{eq:3.2}, 
since  $M$ is given by $  M_i:=\tilde{\beta}\circ  \overline{\mu}_{i,R}\circ \tilde{\rho}^{-1}$ 
for $i=0, \dots, n$. Therefore, in the sequel of this proof we will denote an element of $\underline C^p(A)$ as a triple $(\rho, M,\beta)$ and by $\widetilde{\rho}$  
and $\widetilde{\beta}$ the corresponding isomorphisms as in diagram \eqref{diagram4}.

\medskip

{\bf Claim 2}   $\chi:C^p\fd \hilbScheme{\PP^\n}{p}$ is a principal bundle  with  fibers  isomorphic to  $\gl{\p(R)}\times\gl{\p(R+1)}$. 

\medskip
 
The claimed property  is local on the Hilbert scheme.  Then, we may consider any open subset  $U=\spec{A}$ of $\hilbScheme{\PP^\n}{p}$  as above  and prove that  
 $\chi^{-1}(\spec{A})$ is isomorphic 
to $\spec{A}\times \gl{\p(R)}(A)\times \gl{\p(R+1)}(A)$.  In this case, as proved in the
Claim 1,  $(\rho, M,\beta ) \in \chi^{-1}(\spec{A})$
 if and only if  $\Ker(\rho)=I_{A,R}$, $ \Ker(\beta)=I_{A,R+1}$.  

We choose and fix an element  $(\rho_{\ast}, M_\ast, \beta_\ast)\in \chi^{-1}(\spec{A})$.

Then we can associate  to every   $(\rho, M,\beta )\in \chi^{-1}(\spec{A}$  the pair  $(\tilde\rho \circ \widetilde{\rho_{\ast}}^{-1}, \tilde\beta \circ
 \widetilde{\beta_{\ast}}^{-1}  )$ in $\gl{\p(R)}(A)\times\gl{\p(R+1)}(A)$. 

  On the other hand, we  can associate to any  pair of isomorphisms  $(\gamma_R, \gamma_{R+1})\in \gl{\p(R)}(A)\times\gl{\p(R+1)}(A)$, the triple 
$(\rho,  M, \beta)$ given by $\rho:=\gamma_R\circ \rho_\ast$, $\beta:=\gamma_{R+1} \circ \beta_\ast$ and $M_i:= \gamma_{R+1}\circ M_{\ast,i}\circ \gamma_R^{-1}$
 for every $i=0, \dots, n$.
  This   is again an element of  $\chi^{-1}(\spec{A})$ and, obviously, for different pairs $(\gamma_R, \gamma_{R+1})$ we obtain different elements 
$(\rho,  M, \beta)$ of $ \chi^{-1}(\spec{A})$.
\end{proof}

\section{Plucker coordinates}
\label{sec:Plucker coordinates}

Recall that there are two conventions for the Pl\"ucker
coordinates, which give different signs 
\cite[eq. 1.6]{gelfand-kapranov-zelevinsky94:multidemensionalDeterminants}.
The next propositions recall the basics about Grassmannians. They 
introduce the notations that we need and 
they precise our sign convention for the Pl\"ucker coordinates.

We denote by
$\GrassScheme{V}{q}$ the  Grassmannian   of codimension $q$ subspaces of a vector space 
$V$.  If  $E=\{e_1,\dots,e_v\}$ is an ordered  basis of  $V$,  for every $\kk$-algebra $A$, we also denote by $E$ 
the corresponding  basis $\{e_i\otimes 1_A\}$ of the free $A$-module $ V_A:=V\otimes_\kk A$.

\medskip

 A morphism $\spec A \rightarrow
  \GrassScheme{V}{q}$ is functorially defined by an inclusion of
  $A$-modules $W_A \subset V_A$ such
  that the quotient $V_A/W_A$ is locally free of rank $q$. The Pl\"ucker coordinates of a morphism $ f   \in \Hom(\spec A,
 \GrassScheme{V}{q})$ are defined  as follows. 

\begin{proposition}\label{Prop:plucker} Let  \noindent $F \colon \spec A \rightarrow \GrassScheme{V}{q}$ such that 
 $V_A/W_A$ is free of rank $q$ with basis
  \noindent $F$. Let $N  \in M_{q,v}(A)$ be the matrix  with columns $N_1, \dots N_v$
 corresponding to the canonical morphism 
$V_A \rightarrow V_A/W_A$  with respect to the bases
$E$ and \noindent $F$.    Consider a multi-index $(i_1,\dots,i_q)$ with 
$1\leq i_1 <i_2 < \dots  < i_{q}\leq  v $. The Pl\"ucker  coordinate  $P_{i_1\dots i_q}$  of  \noindent $F$ 
 is by definition the
determinant $det(N_{i_1},\dots,N_{i_q}) \in A$. It is well defined up to
multiplication by an invertible constant depending on the basis
 \noindent $F$.  Equivalently, it is 
    $(e_{i_1}\otimes 1_A)\wedge \dots
    \wedge (e_{i_q}\otimes 1_A) \in \Lambda^q (V_A/W_A)\simeq A$. 
\end{proposition}

 Let us consider for every multi-index  $(i_1,\dots,i_q)$ with $1\leq i_1<i_2  \dots <
i_q \leq v$ an indeterminate $X_{i_1,\dots,i_q}$  and the projective space  $\mathbb{P}=\Proj ( \kk[X_{i_1,\dots,i_q}])$ 
of dimension $\binom{v }{ q} -1$.  The Pl\"ucker embedding we now define  is compatible
with our convention for the Pl\"ucker coordinates. 
\begin{definition}
  The Pl\"ucker embedding $P:\GrassScheme{V}{q} \rightarrow \mathbb{P}=\Proj (  \kk[X_{i_1,\dots,i_q}])$ is the embedding characterized by the
  following: if \noindent $F\in \Hom( \spec A, \GrassScheme{V}{q}) $ is such that
  $V_A/W_A$ is free of rank $q$,  then $P\circ F\in \Hom( \spec
  A,  \mathbb{P})$ is described in coordinates by
  $X_{i_1,\dots,i_q}=P_{i_1,\dots,i_q}$. 
\end{definition}

\begin{remark}\label{rk:Pluckerordered}
 Starting from  \noindent $F:\spec{A}\fd C^p$, we define 
 \noindent $F_R \colon \spec A \rightarrow \GrassScheme{S_{R}}{\p(R)}$
 and \noindent $F_{R+1} \colon  \spec A \rightarrow
 \GrassScheme{S_{ R+1 }}{\p(R+1)}$ by the following compositions:
 \begin{eqnarray}\label{fR}
F_R:   \spec A \fd C^p \fd \hilbScheme{\PP^\n}{\p} \fd
   \GrassScheme{S_{R}}{\p(R)}\\ \label{fR+1}
   F_{R+1}:\spec A \fd C^p \fd \hilbScheme{\PP^\n}{\p} \fd
   \GrassScheme{S_{R+1}}{\p(R+1)}.
 \end{eqnarray}
 We  associate to each  tuple $\underline z=(z_1,\dots,z_{\p(s)}) $  of  monomials  of degree $s$ (where $s=R$ or  $s=R+1$)  ordered  increasingly, a   Pl\"ucker coordinate  on $\GrassScheme{S_{s}}{\p(s)} $ that we  denote 
$P_{z_1,\dots,z_{\p(s)}}$ or $P_{\underline z}$.
To simplify statements and proofs, we also consider  tuples  of monomials   $\underline z $  possibly not ordered in increasing order and with  possibly 
repeated monomials, and associate to them the symbol $P_{\underline z}$ (that, by abuse, we call \Pl\ coordinate).    If two lists of monomials $\underline z$ 
and $\underline z'$  are equal up to a permutation, then $P_{\underline z}=\pm P_{\underline z'}$ with the sign given by the parity of the permutation. 
Then,  if    $\underline z$  contains  repeated  monomials,   $P_{\underline z}$ simply stands for 0, while if the monomials are all distinct, 
$ P_{\underline z}$ is a  true \Pl\ coordinate  up to a sign.
\end{remark}

The next proposition 
describe the Pl\"ucker coordinates of 
 \noindent $F_R$ and \noindent $F_{R+1}$
in terms of  the entries 
of the matrices $\rho,M_0,\dots,M_n$ which are associated to \noindent $F$
through the functorial description of $C^p$.

\begin{proposition} \label{Plcoordinates+1}In the above notations, 
  the Pl\"ucker coordinates $P_{z_1,\dots,z_{\p(R)}}$  of \noindent $F_R$ are the
  maximal minors of $\ro$. The Pl\"ucker coordinates
  $P_{v_1,\dots,v_{\p(R+1)}}$ of \noindent $F_{R+1}$  
  are the maximal minors of
 $\Sigma  M\circ  (\rho \oplus \dots \oplus \rho )$. More precisely, if 
for each monomial $v_i\in S_{R+1} $, we choose   a monomial
$z_{t(i)}\in S_{R} $ and  
a variable   $x_{j(i)}$  such that $v_i=x_{j(i)} z_{t(i)}$ and set
$\tilde z_i=(0,\dots,0,z_{t(i)},0,\dots,0)  \in (
S_{A,R})^{\n+1}   $, where $z_{t(i)}  \in S_{A,R}  $ is located at position $j(i)$ so  that $\Sigma\mu(\tilde
  z_i)=\mu_{  j(i)  }(z_{t(i)})=
v_i$,   then $P_{v_1,\dots,v_{\p(R+1)}}$ is the determinant of
    the matrix whose  $i$-th column is  $C_i:= \Sigma M\circ (\rho \oplus \dots \oplus \rho)(\tilde z_i)$.
\end{proposition}
\begin{proof}
From our constructions, we have the two following diagrams with exact
rows and commutative squares. 
 \begin{displaymath}
    \begin{array}{cccccccccc}
 0&\fd &  I_{A,R} &\fd  &  S_{A,R} &
&\stackrel{ \rho}{\fd} &
A^{\p(R)} &\fd & 0 
    \end{array}
 \end{displaymath}

  \begin{displaymath}
    \begin{array}{cccccccccc}
0&\fd &  (I_{A,R})^{\n+1} &\fd  & ( S_{A,R})^{\n+1} &
&\stackrel{\rho \oplus \dots \oplus \rho }{\fd} &
(A^{\p(R)})^{\n+1} &\fd & 0 \\
    &&  \downarrow \Sigma\mhu{I,R} & & \downarrow \Sigma\mhu{R}                   && &
      \downarrow \Sigma M && \\
0 &\fd & I_{A,R+1} & \rightarrow &  S_{A,R+1} & &\stackrel{\beta}{ \rightarrow} &
 A^{\p(R+1)}& \rightarrow & 0 \\      
    \end{array}
 \end{displaymath}
Using the functorial description of the Grassmannian, the morphism
\noindent $F_R$ is described by the inclusion  $I_{A,R}
\subset  S_{A,R}$.  The first line shows
that the Pl\"ucker coordinates in degree $R$ are given by the
maximal minors of $\rho$.

The morphism
\noindent $F_{R+1}$ is described by the inclusion  $I_{A,R+1}
\subset  S_{A,R+1}$. The last line  shows
that the Pl\"ucker coordinates in degree $R+1$ are given by the
maximal minors of $\beta$.  
Since 
$\Sigma\mu_R  $ is surjective and sends the monomial basis of $(
S_{A,R})^{\n+1} $
to the monomial basis of $S_{A,R+1}$, the maximal minors of 
$\beta$ coincide with the maximal minors of
$\beta\circ \Sigma \mu_R=\Sigma M\circ  \rho \oplus \dots \oplus \rho $.  More precisely,
if for each monomial $v_i\in S_{A,R+1}$,  we choose a monomial  $\tilde z_i$, as described in the statement, then $\beta(v_i)=
\beta(\Sigma \mu_R(\tilde z_i))=(\Sigma M\circ  \rho \oplus \dots \oplus \rho )(\tilde
z_i)$. The Pl\"ucker coordinate $P_{v_1,\dots,v_{\p(R+1)}}$ is the determinant built with the $\beta(v_i)$ as 
columns, so the second equality of the proposition follows. 
\end{proof}

\medskip

From the description of the Pl\"ucker coordinates, 
we get for free the vanishing of some Pl\"ucker
coordinates over the whole Hilbert scheme   if  the  Hilbert polynomial $p$ has a positive degree.   Indeed, among the minors of  $\Sigma  M\circ  
(\rho \oplus \dots \oplus \rho )$ are the minors of $M_i\circ
\rho$, and they vanish for a  nonconstant $p$. 
The idea is as follows, in the case where 
the image $Im(M_i\circ \rho)$  is a free module.
In the composition $  S_{A,R} 
\stackrel{ \rho}{\fd} 
A^{\p(R)}  \stackrel{ M_i}{\fd} A^{\p(R+1)} $,  the rank
of the image $Im(M_i\circ \rho)$ is at most $\p(R)$,  the rank  
of the space in the middle, so that all minors of $M_i\circ \rho$ of order
 $\p(R+1)>p(R)\geq rank(M_i\circ \rho)$  vanish. 
Beyond the vanishing of these particular Pl\"ucker coordinates, 
it is possible to get more general linear equations using 
a similar trick and elaborating on the above observation.

\section{A stratification of the Grassmannian $ \GrassScheme{S_{R+1}}{\p(R+1)}$}\label{sec:strat}

In this section we introduce for every integer $b\geq \p(R)$ a
subscheme $\mathbf{H}^{(b)}\subset
\GrassScheme{S_{R+1}}{\p(R+1)}$ of  the Grassmannian whose closed
points are the vector spaces $I_{R+1}\subset S_{R+1}$ with
$codim((I_{R+1}:l)\subset S_R)<b$. 
We will prove that $\mathbf{H}^{(b)}$  is cut out by a linear space,  is empty for $b=\p(R)$ and contains the Hilbert scheme for  $b>\p(R)$.  

 We recall that  a map $\spec{A}\rightarrow {\GrassScheme{S_{R+1}}{\p(R+1)}}$  is given in the functorial description of the Grassmannian by a submodule  
$I_{A,R+1}\subset S_{A,R+1}$ with locally free quotient of rank $p(R+1)$.

\begin{definition}\label{def:functorH}
  Let $X$ be a noetherian scheme. 
  For every  integer $b\geq \p(R)$, we say that a morphism $X\to
  \GrassScheme{S_{R+1}}{\p(R+1)}$ satisfies the property 
  ${\mathcal{P}}^{b}$ if  for 
  every   noetherian $k$-algebra $A$ and every morphism $\spec A \to X$,
  the composed map  $\spec{A}\rightarrow {\GrassScheme{S_{R+1}}{\p(R+1)}}$  satisfies  $\wedge^{b}S_{A,R}/(I_{A,R+1}\colon \ell)=0 $ for every $\ell \in S_{1}$.
\end{definition}

It is not obvious that this definition is local as 
in general the colon ideal   does not commute with the change of
scalars, namely  for a morphism of $k$-algebras $A\rightarrow B$ the
module $S_{A,R}/(I_{A,R+1}:\ell)\otimes_AB$ can be different from $
S_{B,R}/(I_{A,R+1}\otimes_A B :\ell)$. 
However, this locality is true, as formulated in the next
proposition. The proof is an easy application of  Lemma \ref{prop:contiene}.  

\begin{proposition}\label{prop:contieneExplicit}     
  The morphism $X\to
  \GrassScheme{S_{R+1}}{\p(R+1)}$ satisfies the property 
  ${\mathcal{P}}^{b}$ if  for some (or any) open covering of $X$ by
  affine subschemes  $\spec{A_i} 
  \to X$, the equality $\wedge^{b}S_{A_i,R}/(I_{A_i,R+1}\colon \ell)=0
  $ holds for all $i$ and $l\in S_1$.  
\end{proposition}

\begin{lemma}\label{prop:contiene}     
If  $\spec{A}\rightarrow {\GrassScheme{S_{R+1}}{\p(R+1)}}$  is  given  by the submodule  $I_{A,R+1}\subset S_{A,R+1}$, then 
       \begin{enumerate}
       \item $\wedge^{b}S_{A,R}/(I_{A,R+1}\colon \ell)=0$  if and only if    $\wedge^{b}S_{A_{\mathfrak p},R}/(I_{A_{\mathfrak p},R+1}\colon \ell)=0$    
for every prime (or maximal)  ideal $\mathfrak p$ of $A$.     

\item If    $A\fd B$ is a morphism  of  $\kk$-algebras  and $\wedge^{b}S_{A,R}/(I_{A,R+1}\colon \ell)=0$, then
also  $\wedge^{b}S_{B,R}/(I_{B,R+1}\colon \ell)=0$.  
\end{enumerate}
\end{lemma}
\begin{proof}
 $(1)$ Let  us consider  any prime (or maximal)  ideal $\mathfrak p$ of $A$  and any linear form $\ell$. 
 
The localization commutes with  the colon ideal over a finitely generated ideal \cite[Corollary 3.15]{atiyahMacdonal:introductionToCommutativeAlgebra}. 
 Then,  we  have the equalities 
$$S_{A_{\mathfrak p},R}/(I_{A_{\mathfrak p},R+1}\colon \ell)=S_{ A_{\mathfrak p},R} /((I_{A,R+1}\colon \ell)\otimes_A A_{\mathfrak p})=
(S_{A,R} \otimes_A A_{\mathfrak p} )/((I_{A,R+1}\colon \ell)\otimes_A A_{\mathfrak p}). $$
 Moreover, the localization commutes with   the quotient \cite[Corollary 3.4]{atiyahMacdonal:introductionToCommutativeAlgebra} and with 
the  exterior powers   \cite[Proposition A2.2 b]{eisenbud95:commutativeAlgebraWithAView}, so that 
$$\wedge^{b}(S_{A_{\mathfrak p},R}/(I_{A_{\mathfrak p},R+1}\colon \ell))=\wedge^{b}((S_{A,R}  /(I_{A,R+1}\colon \ell))\otimes_A A_{\mathfrak p} )=
 (\wedge^{b}S_{A,R}/(I_{A,R+1}\colon \ell) )\otimes_A A_{\mathfrak p}   .$$
Therefore, if $\wedge^{b}S_{A,R}/(I_{A,R+1}\colon \ell) =0$, then also $\wedge^{b}S_{A_{\mathfrak p},R}/(I_{A_{\mathfrak p},R+1}\colon \ell)=0$
 for every prime (or every maximal) ideal $\mathfrak p$ of $A$. Moreover, also the converse is true, since 
 the property of being the null module  is  local  \cite[Proposition  3.8]{atiyahMacdonal:introductionToCommutativeAlgebra}.

\medskip
  
  $(2)$  Recall that by the functorial definition of Grassmannian,   $I_{B,R+1}=I_{A,R+1}\otimes_AB$.
 Tensorizing  the sequence 
\begin{equation} \label{standard}0\fd(  I_{A,R+1}\colon \ell)\fd  S_{A,R} \fd S_{A,R}/(  I_{A,R+1}\colon \ell)\fd 0\end{equation}
we get 
\begin{equation} \label{tensorB}  (  I_{A,R+1}\colon \ell)\otimes_AB \stackrel{f}\fd  S_{B,R} \fd (S_{A,R}/(  I_{A,R+1}\colon \ell))\otimes_A B\fd 0.\end{equation}

We observe that \noindent $f( (  I_{A,R+1}\colon \ell)\otimes_AB )$ is contained in     $(I_{B,R+1}\colon \ell)$; indeed,             if $m $ is any element in $S_{A,R}$ 
such that $\ell m \in I_{A,R+1}$, then $m\otimes_A 1_B$ belongs to  $(I_{B,R+1}\colon \ell)$  since  $\ell (m\otimes_A 1_B)=\ell m \otimes_A 1_B$
 belongs to $ I_{A,R+1} \otimes_A B$.
 Therefore, there is a surjective  map $ S_{B,R}/f((I_{A,R+1}\colon \ell)\otimes_A B) \fd S_{B,R}/(I_{B,R+1}\colon \ell)$; by  
 \cite[Prop. A.2.2,d]{eisenbud95:commutativeAlgebraWithAView}  also the following is surjective
\begin{equation}\label{surjec}  \wedge^{b}S_{B,R}/f((I_{A,R+1}\colon \ell)\otimes_A B) \fd \wedge^{b}S_{B,R}/((I_{B,R+1}\colon \ell)\fd 0.  \end{equation}
By \eqref{tensorB},  we see that  $\mathrm{coker}(f) =S_{B,R}/ f(( I_{A,R+1}\colon \ell)\otimes_A B)\simeq (S_{A,R}/(  I_{A,R+1}\colon \ell))\otimes_A B $.
Then, applying \cite[Proposition A.2.2,b]{eisenbud95:commutativeAlgebraWithAView}  we get  $\wedge^{b} \left( (S_{A,R}/(  I_{A,R+1}
\colon \ell))\otimes_A B\right) =\left(\wedge^{b}(S_{A,R}/ ( I_{A,R+1}\colon \ell)\right)  \otimes_A B=0 $.    We conclude  by \eqref{surjec}.
\end{proof}

\begin{remark}  \label{rk:isfunctReformulated}
  We can
  easily 
  see that  a morphism $\spec{A}\rightarrow
  {\GrassScheme{S_{R+1}}{\p(R+1)}}$  which satisfies  $ {\mathcal  P}^b$, also  satisfies  $ {\mathcal  P}^{b'}$
for $b<b'$.    Moreover, 
all the morphisms $\spec{A}\rightarrow
  {\GrassScheme{S_{R+1}}{\p(R+1)}}$ satisfy the property  $ {\mathcal
    P}^{p(R+1)+1}$, as can be proved by a variation on proposition  \ref{prop:caracterizationH}.


Therefore, in the following we will study only the properties   $ {\mathcal  P}^b$ with $b\leq p(R+1)$.

Furthermore, for a constant  Hilbert polynomial 
$p(t)=d$, then $p(R)=p(R+1)=d$ so that we may reduce to study $ {\mathcal  P}^b$ with $b\leq d=p(R)$.

\end{remark}

\begin{proposition}\label{prop:caracterizationH}
Let $b\leq p(R+1)$. The following are equivalent for a map $\alpha \colon \spec{A}\fd   {\GrassScheme{S_{R+1}}{\p(R+1)}}$
\begin{enumerate}[i)]
\item   $\alpha$ satisfies ${\mathcal P}^{b}$;
 \item for every tuples  $\underline z$  of $b$ monomials    in $S_{R}$ and   $\underline v$ of ${\p(R+1)}-b$ monomials   in $S_{R+1}$, the image of 
$e^{(b)}(\ell,\underline z,\underline v):=\wedge(\ell \underline  z,   \underline v)$
  in $\wedge^{\p(R+1)}(S_{A,R+1}/I_{A,R+1})$   vanishes for every $\ell \in S_{A,1}$ (or  
 for every $\ell$  in a dense subset of $S_1$,   or for every
 $\ell$ in $S_1$  ).
 \end{enumerate}
\end{proposition}
\begin{proof}


We first prove the equivalence between $i)$ and the alternative in
$ii)$  for all $l \in S_1$.

By proposition \ref{prop:contieneExplicit}, 
the condition $i)$ asserts that $ \wedge^{ b}
(S_{A,R}/(I_{A,R+1}:l))=0$ or equivalently that 
\begin{eqnarray*}
  \wedge^{ b}  S_{A,R}&\fd& \wedge^{ b} (S_{A,R}/(I_{A,R+1}:l))
\end{eqnarray*}
is the null morphism for every $l$. This is also equivalent to the
vanishing of the morphism
\begin{eqnarray*}
  \wedge^{ b}  S_{A,R}&\stackrel{\phi_l}{\fd}& \wedge^{ b}
                                               (S_{A,R+1}/I_{A,R+1})\\
z_1\wedge \dots \wedge z_{b}&\mapsto& lz_1\wedge \dots
\wedge lz_{b}
\end{eqnarray*}
since both morphisms have the same kernel
 $(I_{A,R+1}:l)\wedge
\wedge^{ b-1}  S_{A,R}$
according to
\cite[Prop. A.2.2,d]{eisenbud95:commutativeAlgebraWithAView}.
The vanishing of $\phi_l$ is equivalent to the vanishing of
$\wedge(\ell \underline  z)$
  in $\wedge^{b}(S_{A,R+1}/I_{A,R+1})$ for all monomials $z_i$, or to
  the vanishing of  
$e^{(b)}(\ell,\underline z,\underline v):=\wedge(\ell \underline  z,
\underline v)$ for all monomials $z_i,v_i$.

\medskip

Finally, we prove that the  alternatives in $ii)$ are equivalent, namely that   the vanishing of $e^{(b)}(\ell,\underline z,\underline v)$ 
for $\ell$ in a dense subset of $S_1$ implies its vanishing for all $\ell\in S_{A,1}$.

    Let us consider the linear form \noindent $L:=y_0x_0+\dots+y_nx_n\in S_{B,1}$,  where $B$ is the polynomial ring   $A[y_0,\dots, y_n]$
 in the indeterminates $y_0,\dots, y_n$, and let  $\underline z$   and $\underline v$ as in the statement.
 If we formally develop  $ \wedge (L \underline z,  v)$  with respect to the indeterminates  $y_0, \dots, y_n$ and   coefficients 
 in  $\wedge^{\p(R+1)}S_{R+1}$,   we obtain  a homogeneous polynomial  of degree $b$.  
If  we now consider the image of the coefficients under the projection  
 $\wedge^{\p(R+1)}S_{R+1} \fd \wedge^{\p(R+1)}S_{A,R+1}/I_{A,R+1}\simeq A$,   we obtain  polynomials    
$Q^{(b)}_{\underline z, \underline v}$ in $A[y_0, \dots, y_n]$. 
 When \noindent $L$ specializes to any $ \ell \in S_ {1}$, the polynomial $Q^{(b)}_{\underline z, \underline v}$  specializes 
   to  $e^{(b)}(\ell,\underline z,\underline v)$. We then conclude by Lemma \ref{ZariskiDense}  \eqref{ZariskiDense1}. 
\end{proof}

\begin{definition}\label{not:defH}
For every $b\leq \p(R+1)$, let $\underline z=(z_1,\dots,z_{b})$ be a tuple of monomials in
$S_{R}$,  $\underline v=(v_{b+1},\dots,v_{\p(R+1) })$ be a tuple of 
monomials in
$S_{R+1}$, and  $\underline x=(x_{i_1},x_{i_2}..., x_{i_{b}})$
be a tuple of   variables, all of them ordered  increasingly.  We will denote by $ P_{\underline x , \underline z, \underline v}$ be
the Pl\"ucker
coordinate on $\GrassScheme{S_{R+1}}{\p(R+1)}$ associated to
  the tuple 
$( x_{i_1} z_1,...., x_{i_b}z_{b},v_{b+1},\dots,v_{\p(R+1) })$ 
and  by   $\mathbf H_{\underline x,\underline z,\underline v}^{(b)} 
 $  
 the hyperplane section of $ \GrassScheme{S_{R+1}}{\p(R+1)} $  given  by the vanishing of the  linear form $E^{(b)}(\underline x,\underline
z,\underline v):=\sum P_{\sigma(\underline x),\underline z, \underline v }$
where the sum runs over all the possible distinct permutations $\sigma(\underline x)$ of the tuple  $\underline x$  
(two permutations  $\sigma(\underline x)$  and $\sigma'(\underline x)$
are distinct if $\sigma(\underline x)$  and $\sigma'(\underline x)$
are different as ordered tuples).

The scheme ${\mathbf  H}^{(b)}$ is by definition the  closed \ subscheme   
$\GrassScheme{S_{R+1}}{p(R+1)}$ cut out by the hyperplanes $\mathbf{H}^{(b)}_{\underline x,\underline z,\underline v}$ 
for every  tuples  $\underline x$, $\underline z$   and   $\underline v$.
\end{definition}

\begin{proposition} \label{fondamentale7}
  A morphism $\alpha:X\fd \GrassScheme{S_{R+1}}{\p(R+1)}$ satisfies
  the property ${\mathcal P}^b$ if and only if it factorizes through
  ${\mathbf  H}^{(b)}$.
\end{proposition}

\begin{proof}  We can check the statement locally, namely considering a local $k$-algebra $A$ and a map
 $\alpha \colon \spec{A}\fd \GrassScheme{S_{R+1}}{\p(R+1)}$, so that
 $S_{A,R+1}/I_{A,R+1}$ is free and $\alpha$   has \Pl\ coordinates. 
We exploit the argument presented in the proof of Proposition \ref{prop:caracterizationH}.
The   coefficient in  $Q_{\underline z, \underline v}^{(b)}\in A[y_0, \dots, y_n]$ of each monomial $y_{i_1}\cdots y_{i_b}$ is (up to a sign) 
 the value at  $I_{A,R+1}$ by the linear form $ E^{(b)}(\underline x,\underline
z,\underline v )$ with $\underline x=(x_{i_1},\dots , x_{i_b})$. Thus
$\alpha$ factorizes through ${\mathbf  H}^{(b)}$ iff  all the polynomials
$Q_{\underline z, \underline v}^{(b)}$ vanish. 
By   Proposition  \ref{prop:caracterizationH},  this is
equivalent to the property
$ {\mathcal P}^{b} $  for $\alpha$.
\end{proof}

\begin{remark} \label{rk:isfunct} Reformulating Remark
  \ref{rk:isfunctReformulated} with Proposition \ref{fondamentale7},
we get that   $ {\mathbf  H}^{(b)}$  is a subscheme  of $ {\mathbf
  H}^{(b')}$ if $b<b'$, and    $ {\mathbf{H}}^{(b)}=
{\GrassScheme{S_{R+1}}{\p(R+1)}}$  for every   $b\geq \p(R+1)+1$.
Therefore, in the following we will study only the schemes $ {\mathbf{H}}^{(b)}$ with $b\leq p(R+1)$.
\end{remark}

\section{The schemes $\mathbf E$,  $\mathbf F$ and $\mathbf \Delta$}
\label{sec:6}

\begin{notation}    \label{rk:uguale}
  To bridge the notations of the previous section and the notations of
  of Theorem B {\it 2)} and {\it 3)}, we observe that
  by definition  $ E(\underline x,\underline
z,\underline v )   = E^{(\p(R)+1)}(\underline x,\underline
z,\underline v)$  and   $F(\underline x,\underline
z,\underline v )=E^{(\p(R))}(\underline x,\underline
z,\underline v)$.

We will denote by  ${\mathbf E}$ and  ${\mathbf F}$  the closed subschemes  of $\GrassScheme{S_{R+1}}{\p(R+1)}$
that are defined  by the  linear forms   $E(\underline x,\underline z,\underline v)$ and  
\noindent $F(\underline x, \underline z,\underline v )$.
In other words, 
${\mathbf E}= {\mathbf  H}^{(\p(R)+1)}$  and ${\mathbf F}= {\mathbf  H}^{(\p(R))}$.

We will denote by $\mathbf \Delta$ the closed subscheme of of $\GrassScheme{S_{R+1}}{\p(R+1)}$ defined by the quadratic  equations  $F(\underline x,\underline {z},\underline {v} )F(\underline x', \underline {z'},\underline {v'})-
 F(\underline x,\underline {z'},\underline {v})F(\underline x',\underline {z},\underline {v'})$ of Theorem B {\it 3)}.
\end{notation}

The present section is devoted to a further study of the schemes  $\mathbf E$,  $\mathbf F$ and $\mathbf \Delta$.
First we  give an intrinsic  description of ${\mathbf E}$ that depends on, and in some sense generalizes,   Green's  Theorem  (Theorem \ref{macaulayMaximalGrowth} \eqref{macaulayMaximalGrowth_3}). In fact we can rephrase the following result saying that $\mathbf E$ is the locus of the Grassmannian where Green's bound is sharp.

 \begin{theorem} \label{prop:generalform1} Let  us consider a map  $\alpha \colon \spec{A}\fd \GrassScheme{S_{R+1}}{\p(R+1)}$ with  
$(A, \mathfrak m,K)$   a local   $\kk$-algebra.  The following are equivalent 
 \begin{enumerate}[1)]
\item \label{prop:generalform11}
 $\alpha$ factorizes through ${\mathbf E}$
\item \label{prop:generalform12} for every  general  $\ell \in S_1$ the quotient  $J_\ell:=S_{A,R}/(I_{A,R+1}\colon \ell )$ is  free  of rank $ p(R) $,  
namely  it  corresponds to a map $\alpha_{R,\ell} \colon \spec{A}\fd \GrassScheme{S_{R}}{p(R)}$.
\end{enumerate}
\end{theorem}

\begin{proof}
2)  $\Rightarrow$ 1) . In fact for every tuple $\underline z$ of $\p(R)+1$  monomials 
in $S_R$  and a general $\ell$ in $S_1$,     $\wedge  \underline z$  vanishes  since it is an element of  $\wedge^{\p(R)+1}S_{A,R}/(I_{A,R+1}\colon \ell )=0$. 
 Hence also $\wedge \ell \underline z, \underline v$ vanishes in
 $\wedge^{p(R+1)}(S_{A,R+1}/I_{A,R+1})$. Thus $\alpha$ has property
 $\mathcal P^{p(R+1)}$ from Proposition  \ref{prop:caracterizationH}
 and we conclude by Proposition \ref{fondamentale7}. 

\medskip

1)  $\Rightarrow$ 2)
 As $A$ is local,  $S_{A,R+1}/I_{A,R+1}$ is free with rank $p(R+1)$ and  $I_{A,R+1}$ is free with rank $q(R+1)$.  Recall that $N(t)$ and $q(t)$ are 
$\dim_\kk S_t$ and respectively $N(t)-\p(t)$.
 
\smallskip

 We choose any $\ell$ (general) such  that Green's Theorem holds for
 $I_{K,R+1}=I_{A,R+1}\otimes_A K$.   Green's results holds for
 every $\ell$ in a suitable open subset $U$ of $K^n$. According to
 lemma \ref{ZariskiDense},
 we may choose $\ell$ general in $k^n=S_1$.
 We then perform a change 
 of coordinates leading $\ell$ to $x_0$.



In this way  we  have    $d:=\dim_K S_{K,R}/(I_{K,R+1}:x_0)\geq p(R)$.   We can then choose a tuples $\underline z$ 
of $d$ monomials in $S_R$ and $\underline v$ of $\p(R+1)-d$ monomials in $S_{R+1}$ such that $\underline z$ is a basis of $S_{K,R}/(I_{K,R+1}:x_0)$ and 
$x_0\underline z, \underline v$ is a basis of $S_{K,R+1}/I_{K,R+1}$. As $\wedge(x_0\underline z,\underline v)$ is non zero  when computed in  $\wedge^{\p(R+1)}S_{K,R+1}/I_{K,R+1}=\wedge^{\p(R+1)}S_{A,R+1}/I_{A,R+1}\otimes_A A/\mathfrak m$, then it is also invertible  in 
$\wedge^{\p(R+1)}S_{A,R+1}/I_{A,R+1}$.

  \smallskip
  
  We now   construct a special basis for  $I_{A,R+1}$ starting from $x_0\underline z, \underline v$.
   
 Let $B=\{x_0\underline w, \underline u\}$    be the set of $q(R+1)$ monomials in $S_{R+1} \setminus \{x_0\underline  z, \underline v\}$, where 
$\underline u$ is the tuple of those not divisible by  the variable $x_0$: note that by construction  $\underline w, \underline z$ is the complete 
list of monomials in $S_R$; hence $\underline w$  contains    $c:=N(R)-d\leq q(R)$ monomials.
By Nakayama, every monomial in $B$ can be written modulo $I_{A,R+1}$ as a linear combination of monomials in $\{x_0\underline z, \underline v\}$, thus
we can find in $I_{A,R+1}$ polynomials    \noindent $T_1, \dots, T_{q(R+1)}$  such that the $i$-th monomial in  $B$ appears only in \noindent $T_i$ and its coefficient is $1_k$. 

These polynomials $T_i$ are in fact a free set of generators  of
$I_{A,R+1}$. Indeed, every polynomial \noindent $G \in I_{A,R+1}$ has a unique
writing  \noindent $G=\sum_{i=1}^{q(R+1)} a_i T_i$, where each $a_i$
is the coefficient in  \noindent $G$ of the $i$-th monomial  of $B$.
Unicity is clear considering the coefficients on the monomials of
$B$. On the other hand $G-\sum_{i=1}^{q(R+1)} a_i T_i$ lies in
$I_{A,R+1}$, is a linear combination of the monomials $
\{x_0\underline  z, \underline v\}$ which form a base of
$S_{K,R+1}/I_{K,R+1}=S_{A,R+1}/I_{A,R+1}\otimes K$, hence by Nakayama
a base of $S_{A,R+1}/I_{A,R+1}$.

Hence, for  every $D\in  (I_{A,R+1} \colon x_0)$  we have       $x_0D =\sum_{i=1}^c a_i T_i$. 
   Furthermore,  if \noindent $T_i=x_0T_i'+T_i''$ with \noindent $T_i''\in A[x_1, \dots, x_n]$,  then $x_0D=x_0  \sum_{i=1}^c a_i T_i'$  since the summand $\sum_{i=1}^c a_i T_i''$
 belongs to  $x_0S_{A,R}\cap A[x_1, \dots, x_n]=\{0\}$. Therefore,  $D=\sum_{i=1}^c a_i T_i'$, so that  $(I_{A,R+1}\colon x_0)$ is contained in the
 $A$-submodule $Q$ of $S_{A,R}$ generated by the polynomials \noindent
 $T_1', \dots, T_c'$: note that by construction these polynomials are
 linearly independent on $A$ because their matrix of coefficients
 corresponding to the monomials of $\underline w$ is  the identity.  
This matrix property also shows that    $S_{A,R}=Q\oplus P$ with $P$ the free submodule of rank
$d=N(R)-c\geq \p(R)$ generated by the monomials in $S_R\setminus
\{\underline w\}$.  

Then, in the standard   exact sequence 
\begin{equation} \label{esatta} 0\fd  (I_{A,R+1}\colon x_0) \stackrel{i}\rightarrow   S_{A,R} =Q\oplus P \rightarrow S_{A,R}/ (I_{A,R+1}\colon x_0)\rightarrow 0\end{equation}
the image of the first map  is contained in  $  Q$.   Therefore, $ S_{A,R}/ (I_{A,R+1}\colon x_0) $ is isomorphic to $Q/\Imm(i) \oplus P$. By hypothesis,   $ \wedge^{\p(R)+1}S_{A,R}/ (I_{A,R+1}\colon x_0)=0 $, hence $\rk{P}=\p(R)$ and $Q/\Imm(i)=0$, namely  $ S_{A,R}/ (I_{A,R+1}\colon x_0) \simeq P$ is free of rank $\p(R)$ and $(I_{A,R+1}\colon x_0)=Q$ is free of rank $q(R)$. 
\end{proof} 

Now  we prove that $\mathbf F$ is empty and then use this fact to  give an intrinsic description of the scheme theoretical intersection  $\mathbf E\cap \mathbf  \Delta $.

\begin{lemma}\label{prop:nonvanishing1}  For every 
  map $ \spec{A}\fd \GrassScheme{S_{R+1}}{\p(R+1)}$  with $A$ local,  there exist tuples $ \underline z_0$ and  
$ \underline v_0$  of monomials such that the following elements are invertible at $ I_{A,R+1}$
  \begin{itemize}
  \item  $e^{(\p(R))}(\ell_0,\underline z_0,\underline v_0) =\wedge (\ell_0 \underline z_0, \underline v_0)$  for every general linear form  
$\ell_0$ in  $S_1$.
  \item   \noindent $F(\underline x,\underline z_0,\underline v_0)$  for  a suitable  tuple   $\underline x$ of $\p(R)$ variables.
  \end{itemize}
Therefore, the scheme $\mathbf{F}$ is empty. 
\end{lemma}
\begin{proof}
We first prove  the result assuming that $A=K$ is a field.  If  $b$ is the dimension of the $K$-vector space $S_{K,R}/(I_{K,R+1}:\ell)$ 
with  $\ell$     a general linear form in $S_{1,K}$, by Green's  Theorem \ref{macaulayMaximalGrowth} \eqref{macaulayMaximalGrowth_3} 
we have $b\geq p(R)$. By Lemma \ref{ZariskiDense} 
 this is also true for a general $\ell_0 \in S_1$. Therefore,
$\wedge^{p(R)}S_{K,R}/(I_{K,R+1}\colon \ell_0)\neq 0 $.
It follows by Proposition \ref{prop:caracterizationH} that
 $e^{(\p(R))}(\ell_0,\underline z_0,\underline v_0) =\wedge (\ell_0
 \underline z_0, \underline v_0)$ is
invertible in  $\wedge^{p(R+1)}S_{K,R+1}/I_{K,R+1}\simeq K$.

We can extend the proof to the case of a local ring $(A,\mathfrak m,K)$  applying the first part to  $ \spec{K}\fd \spec{A}\fd 
\GrassScheme{S_{R+1}}{\p(R+1)}$. If $e^{(\p(R))}(\ell_0,\underline z_0,\underline v_0)$  gives a non-zero element of  $\wedge^{\p(R+1)}S_{K,R+1}/I_{K,R+1}\simeq K$,  
then  it gives an invertible element of $\wedge^{p(R+1)}S_{A,R+1}/I_{A,R+1}\simeq A$, since this element does not vanish modulo $\mathfrak m$.

 The second item follows from the first one by Proposition \ref{fondamentale7}. 
As a consequence,  $\mathbf F$ is empty since it has no closed points.
\end{proof}

\begin{lemma}\label{lem:equiv}
Let  $A$ be a local $\kk$-algebra and  $\alpha \colon \spec{A}\fd \GrassScheme{S_{R+1}}{\p(R+1)}$. The following are equivalent:
\begin{enumerate}
\item \label{eq:degreetwo}  $\alpha$ factorizes through $\mathbf \Delta$ 
\item\label{eq:manywedge}    
$ \wedge (\ell \underline z,  \underline v)\   \cdot\  { \wedge (  \ell'\underline z',   \underline v') }  -   {\wedge ( \ell' \underline z,  \underline v')}\ \cdot
\ { \wedge (  \ell\underline z',  \underline v) }$  vanishes in    $\wedge^{\p(R+1)} S_{A,R+1}/I_{A,R+1}\simeq A$
for every   tuples $(  \underline v,\underline z)$ and $ ( \underline v',\underline z')$  and every (general)   linear form $\ell, \ell'$ in $S_1$;
\item $ \wedge (\ell \underline z,  \underline v)\   \cdot\  { \wedge (  \ell_0\underline z_0,   \underline v_0) }  -   {\wedge ( \ell_0 \underline z,  \underline v_0)}\ \cdot
\ { \wedge (  \ell\underline z_0,  \underline v) }$  vanishes in    $\wedge^{\p(R+1)} S_{A,R+1}/I_{A,R+1}\simeq A$
for every   tuples $(  \underline v,\underline z)$,  every general  $\ell$ in $S_1$ and 
 $(\ell_0,\ \underline z_0, \ \underline v_0)$ such that $\wedge (\ell_0 \underline z_0, \underline v_0)$ is invertible.
\end{enumerate}
\end{lemma}

\begin{proof}
  The equivalence of the the first two conditions  is obtained by a direct computation, similarly to Proposition \ref{fondamentale7}:   
  consider two linear forms \noindent $L=x_0y_0+\dots, x_n y_n$ and \noindent $L'=x_0y'_0+\dots, x_n y'_n$ with $y_i, y'_j$ indeterminates and  formally expand 
$M(y_i, y'_i, \underline z,  \underline v,\underline z',  \underline v'):= \label{eq:degreetwoL}{\wedge ( L \underline z,  \underline v)}   \cdot  { \wedge ( L'\underline z',  \underline v') }   -  
  {\wedge (  L' \underline z,    \underline v')} \cdot  {\wedge ( L  \underline z',  \underline v) }$
with respect to the indeterminates $y_i, y'_j$. 

By definition of $\mathbf \Delta$, the condition $(\bcr 1 \ecr)$ is equivalent to the vanishing of 
all the coefficients  in these expansions, hence is equivalent to the
fact that all the  $M(y_i, y'_i, \underline z,  \underline
v,\underline z',  \underline v')$ are identically zero.    On the
other hands,   $(\bcr 2 \ecr)$ is equivalent to their vanishing  after  the specialization $L\mapsto \ell$, $L\mapsto \ell'$ for every general $\ell, \ell' \in S_1$.   Again this is equivalet to the vanishing of  all the $M(y_i, y'_i, \underline z,  \underline v,\underline z',  \underline v')$  (Lemma \ref{ZariskiDense}).

\medskip

It remains to prove that {\it(3)} implies {\it(2)}.   Note that the existence of a triple  $(\ell_0 \underline z_0, \underline v_0)$ such that $\wedge (\ell_0 \underline z_0, \underline v_0)$ is proved in  Lemma \ref{prop:nonvanishing1}.   Let $a$ be the invertible element of $A$  that we obtain computing
 $\wedge (\ell_0 \underline z_0,  \underline v_0)$ in $\wedge^{\p(R+1)} S_{A,R+1}/I_{A,R+1}$. Let us consider  any element 
\begin{equation} \label{eq:lesswedge}  \wedge (\ell_1 \underline z_1,  \underline v_1)\   \cdot\  
 \wedge  (\ell_2\underline z_2,  \underline v_2)   - \wedge (\ell_2 \underline z_1,  \underline v_2)  \  
\cdot \  \wedge ( \ell_1\underline z_2,  \underline v_1). \end{equation}
From the vanishing of the element in  {\it(3)} in which we set    $ (\ell,\underline z,\underline v) =(\ell_1,\underline z_1,\underline v_1) $ it follows
 $  {\wedge ( \ell_1 \underline z_1,  \underline v_1) }   =   a^{-1} \cdot  {\wedge ( \ell_1 \ \underline z_0,  \underline v_1})\cdot
  { \wedge (  \ell_0\underline z_1,  \underline v_0) }.$
We get  three similar relations  by setting  $(\ell,\underline z,\underline v)$   equal  respectively to $(\ell_2,\underline z_2,\underline v_2)$, 
   $(\ell_2,\underline z_1,\underline v_2)$ and $(\ell_1,\underline
   z_2,\underline v_1)$.  Substituting these four relations in
   \eqref{eq:lesswedge} we get $0$.


\end{proof}

\begin{theorem}\label{th:intersection}
Let  $A$ be a local $\kk$-algebra and  $\alpha \colon \spec{A}\fd \mathbf E$. 
Then 

$\alpha$   factorizes through ${\mathbf E}\cap {\mathbf \Delta}$ if and only if,
  for a general $\ell \in S_1$,  the quotient  $J_\ell:=S_{A,R}/(I_{A,R+1}\colon \ell )$ is  free  of rank $ p(R)$ and    does not depend on $\ell$.
\end{theorem}
\begin{proof}
By hypothesis and Theorem \ref{prop:generalform1},  $S_{A,R+1}/I_{A,R+1}$ is  free of rank $ p(R+1)$ and $ J_\ell$  is free of rank  $p(R)$ for a general $\ell \in S_1$.
  Therefore, it remains to prove that $\alpha$ factorizes also through $\mathbf \Delta$ if  and only if,  for a general $\ell \in S_1$, $ J_\ell$ does not depend on $\ell$ and for this we will  exploit  Lemma~\ref{lem:equiv}.

\medskip

For a general $\ell \in S_1$, we can
identify $J_\ell$ through its \Pl\ coordinates in
$\GrassScheme{S_{R}}{\p(R)}$: we choose an isomorphism $f_\ell\colon
\wedge^{\p(R)}J_\ell \xrightarrow{\sim} A $ and set, for every tuple
$\underline z$ of $p(R)$ monomials in $S_R$,   $P_{\underline
  z}(J_{\ell})=f_\ell(\wedge \underline z)$.

 Let us choose a tuples  $ \underline z_0, \underline v_0$ such that  $\wedge (\ell \underline z_0,   \underline v_0) $ is invertible for  general $\ell \in S_1$ and let $\ell_0$ be one of them  (Lemma \ref{prop:nonvanishing1}).  Then,  also   $\underline z_0$ is a basis for $J_\ell$ and $P_{\underline z_0}(J_{\ell})$ is invertible.  For every tuple    $ \underline z$ there is a suitable   matrix $\mathcal D_{\ell,\underline z} $ with entries in $A$  such that   $\underline z=
\mathcal D_{\ell,\underline z} \cdot \underline z_0$   in   $J_\ell$.
 As a consequence 
 \begin{equation} \label{eq00} 
\wedge\underline z= \det(\mathcal D_{\ell,  z}) \wedge  \underline z_0   \hbox{  in  }  \wedge^{\p(R)}J_\ell  \quad \hbox{ and } \quad   P_{\underline z}(J_{\ell})= \det(\mathcal D_{\ell,  z})\cdot P_{\underline z_0}(J_{\ell})  \hbox{  in  }  A\end{equation} 
and,  for every tuple $\underline v$ of $\p(R+1)-p(R)$ monomials in $S_{R+1}$ (especially, for $\underline v=\underline v_0$).
 \begin{equation} \label{eq0} 
\wedge (\ell \underline z,   \underline v)= \det(\mathcal D_{\ell,
  z}) \wedge (\ell \underline z_0,   \underline v)  \quad
\hbox{in}\quad  \wedge^{\p(R+1)}S_{A,R+1}/I_{A,R+1}\simeq A
.\end{equation}  
 By substitution in  \eqref{eq0} we obtain
  \begin{equation}\label{ratio}   \wedge (\ell \underline z,   \underline v)=  P_{\underline z}(J_{\ell})\cdot  P_{\underline z_0}(J_{\ell})^{-1} \cdot \wedge (\ell \underline z_0,   \underline v) \quad  \forall \underline z, \underline v, \hbox { and $\ell$ general}.  \end{equation} 
  
We use this equality to replace    $(\ell \underline z,  \underline v)$ and  $(\ell_0 \underline z,  \underline v_0)$ in  $ \wedge (\ell \underline z,  \underline v)\   \cdot\  { \wedge (  \ell_0\underline z_0,   \underline v_0) }  -   {\wedge ( \ell_0 \underline z,  \underline v_0)}\ \cdot
\ { \wedge (  \ell\underline z_0,  \underline v) }$ and find that the  condition {\it (3)} of Lemma \ref{lem:equiv} is equivalent to the vanishing of

\begin{equation}\label{eq:quasi} \left( P_{\underline z}(J_{\ell})\cdot  P_{\underline z_0}(J_{\ell})^{-1}- P_{\underline z}(J_{\ell_0})\cdot  P_{\underline z_0}(J_{\ell_0})^{-1}\right) \cdot \wedge (  \ell\underline z_0,   \underline v)  \cdot \wedge (  \ell_0\underline z_0,   \underline v_0) . \end{equation}

If we assume that  $J_\ell$ does not depend on $\ell$ for a general
$\ell$, then in particular $J_\ell=J_{\ell_0}$ (recall that $\ell_0$
is general too) and   \eqref{eq:quasi} vanishes. Hence,  $\alpha$
factorizes   through $\mathbf \Delta$ since  the condition of Lemma
\ref{lem:equiv}{\it (3)}  is fulfilled.

\medskip
On the other hand,  if  we assume that $\alpha$ factorizes through
$\mathbf \Delta$, then  \eqref{eq:quasi} vanishes   for every tuple
$\underline v$ and for general $\ell\in S_1$. We consider the special
case of  \eqref{eq:quasi} with   $\underline v=\underline v_0$ and
denote by $U$ a non-empty open subset of $S_1$ such that for all $\ell
\in U $, the following two conditions are satisfied: 
\begin{gather*}
 \left(
 P_{\underline z}(J_{\ell})
\cdot  P_{\underline z_0}(J_{\ell})^{-1} -  P_{\underline z}(J_{\ell_0})\cdot 
   P_{\underline z_0}(J_{\ell_0})^{-1}\right)   \cdot \wedge
 (  \ell\underline z_0,   \underline v_0)  \cdot \wedge (  \ell_0\underline z_0,   \underline v_0) =0
 \\
 \wedge(  \ell\underline z_0,   \underline v_0)   \hbox{   is invertible } .
\end{gather*} 

 Therefore, $   P_{\underline z}(J_{\ell})\cdot  P_{\underline
   z_0}(J_{\ell})^{-1}- P_{\underline z}(J_{\ell_0})\cdot
 P_{\underline z_0}(J_{\ell_0})^{-1}  =0$, for every tuple $\underline
 z$  and for every $\ell\in U$.  Thus   $J_\ell$ and $J_{\ell_0}$
 coincide since they have the same \Pl\ coordinates up to
 an invertible element.  We conclude that for every $\ell
 $ in $U$, $J_\ell$ does not depend on $\ell$. 
\end{proof}

\begin{remark}[Cross product remark]\label{crossProductRemark}
  In the previous theorem, we proved that the quadratic equations of
  $\Delta$ characterize that $J_l$ does not depend on $l$. In this
  remark, we explain heuristically why it is clear that the
  independency of $J_l$ with respect  to $l$ can be
  characterized by quadratic equations.

Consider the generic
  linear form with indeterminate coefficients ie. $L=a_0x_0+\dots
  a_nx_n$. Then the Plucker coordinates of $J_L$ are computed with
  the indeterminates, ie. they are elements in
  $k(L)=k(a_0,\dots,a_n)$. If $J_l=J$ is independent of $l$, then
  $J_L=J_l=J$ and the coordinates of $J_L$ turn out to be in $k$
  rather than in $k(L)$. Now,  
  a $k(L)$-point of $\PP^n$ is a $k$-point when quadratic 
  cross product equations hold.
  For instance, the point
  $P=(3a_1+2a_0a_2:6a_1+4a_0a_2:9a_1+6a_0a_2)=(1:2:3)\in \PP^2$ is  a
  $k$-point.  The coefficients $(3:6:9)$
  and $(2:4:6)$ of the graded parts are proportional. This is measured by
  determinants of order $2$. Similarly, the equations of $\Delta$ are
  the equations of degree $2$
  corresponding to the proportionality of the graded parts of $J_L$,
  hence they
  characterize that $J_L$ has coefficients in $k$. 

  In fact, this is using the ideas of the present remark that we found
  the equations of $\Delta$, after an explicit computation of
  $J_L$ in a previous version of this paper (arXiv:1612.03074v3).
  However, this approach involved many
  technical details associated with generic points. In the present version,
  we have simplified and suppressed generic points. The price
  paid for this simplification is that we only ``check''  the equations of $\Delta$
  by a local computation. It is not clear how one could have guess the
  equations without
  the use of generic points. 
\end{remark}

\section{The proof of Theorem B}
\label{sec:equations-degree-twoRED}

In theorem $B$, the linear equations are the equations of $E$ and the
quadratic equations are the equations of $\Delta$. Thus the following
claim will conclude
the proof of theorem $B$.
\\
 {\bf Claim} \label{prop:generalform}  The Hilbert scheme  $\hilbScheme{\PP^\n}{\p}$ is the subscheme $\mathbf E\cap \mathbf \Delta$ of $\GrassScheme{S_{R+1}}{p(R+1)}$.
 \medskip

We use the following easy lemma.  
\begin{lemma}\label{lemma:subfunctors}
Let $P$ and $Q$ be  closed  subschemes of a noetherian $\kk$-scheme \noindent $G$. Then, $P$ is a subscheme of $ Q$ if and only if  for every  noetherian 
local $\kk$-algebra  $A$ every map $\alpha\colon \spec{A} \fd G$  that factorizes through $Q$  also factorizes through $P$.
 \end{lemma}

\medskip

\begin{proof}Recall that as proved in 
Theorem \ref{prop:generalform1},    a  map $\alpha_{R+1} \colon \spec{A} \fd  \GrassScheme{S_{R+1}}{p(R+1)}$ ($A$ local)  factorizes through 
$\mathbf E$ if and only if for a general linear form $\ell\in S_1$ the colon ideal $I_{A,R,\ell}:=(I_{A,R+1}\colon \ell)$ gives a map $\alpha_{R,\ell}\colon 
\spec{A} \fd  \GrassScheme{S_{R}}{p(R)}$. Furthermore, $\alpha_{R+1}$   also factorizes  through $\mathbf \Delta$  if and only if for
$\ell$ general  $I_{A,R,\ell}$ is independent of $\ell$.

  As the equality of subschemes of a given scheme is a local property (Lemma \ref{lemma:subfunctors}), we consider  a noetherian local $k$-algebra $A$ and a map 
 $\alpha\colon \spec{A} \fd \GrassScheme{S_{R+1}}{\p(R+1)}$ and prove that $\alpha$ 
    factorizes through ${\hilbScheme{\PP^\n}{\p}}$ if and only if 
$\alpha$   factorizes through ${\mathbf E}$ and     $(I_{A,R+1}\colon \ell )= (I_{A,R+1}\colon S_{1} )$ for a general $\ell \in S_1$ (Theorem \ref{th:intersection}).
\medskip

First, we assume that $\alpha$ factorizes through $\hilbScheme{\PP^\n}{\p}$.

  Following the description of the Hilbert scheme given in Theorem \ref{constructionHilbertDegreeRRPlusOne}, to    $\alpha\colon \spec{A} \fd \hilbScheme{\PP^\n}{\p} \fd \GrassScheme{S_{R+1}}{\p(R+1)}$  corresponds  ${\alpha_R}\colon \spec{A} \fd \hilbScheme{\PP^\n}{\p} \fd \GrassScheme{S_{R}}{\p(R)}$ given by  
 a submodule     $I_{A,R}$ of  $S_{A,R}$ such that   $S_{A,R}/I_{A,R}$ is (locally) free of rank $\p(R)$ and $S_1 I_{A,R} \subset I_{A,R+1}$.
  
 Then,  $(I_{A,R+1}\colon \ell) \supset (I_{A,R+1}\colon S_1)\supset I_{A,R}$, so that   there is a surjective map  $  S_{A,R}/ I_{A,R}\fd S_{A,R}/(  I_{A,R+1}\colon \ell)  $. 
As the computation of  exterior powers  preserves the surjectivity
\cite[Proposition A2.2 d]{eisenbud95:commutativeAlgebraWithAView},  we
obtain the exact sequence   $0=  \wedge^{\p(R)+1} S_{A,R}/ I_{A,R}\fd
\wedge^{p(R)+1}S_{A,R}/(  I_{A,R+1}\colon \ell)  \fd 0$ and then the
vanishing of $\wedge^{p(R)+1}S_{A,R}/(  I_{A,R+1}\colon \ell)$.  By
Definition \ref{def:functorH} and  Notation \ref{rk:uguale} the  map $\alpha$ also  factorizes through ${\mathbf E}$.

\smallskip

  Now we prove the equalities  $(I_{A,R+1} \colon \ell)=(I_{A,R+1}
  \colon S_{1})=I_{A,R}$ for every general $\ell$ in $S_1$.
If $I_A$ is  the saturation  of  the ideal generated by $I_{A,R+1}$, then $I_{A,R}$ and $I_{A,R+1}$  are its homogeneous components of degree
$R$ and $R+1$ respectively. 
By the assumption on the noetherianity of
$A$,  the ideal  $I_A$ has a primary reduced decomposition   $I_A=\bigcap \mathfrak q_i$.
Moreover,  no   associated prime $\mathfrak p_i=\sqrt{\mathfrak q_i}$
contains all the variables $x_0, \dots, x_n$, since   $I_A$ is
homogeneous and saturated.
Hence,  
 the open subset of linear forms $U= \cap_i (S_1\setminus \mathfrak p_i)$ is non-empty   and  for every   
$\ell \in U$ we have  $(I_A\colon \ell)=(\bigcap \mathfrak q_i \colon \ell)=\bigcap (\mathfrak q_i \colon \ell)= \bigcap \mathfrak q_i=I_A$.
 
 Note that  ${\alpha}_R$ is indeed the map $\alpha_{R,\ell}$ for every general $\ell \in S_1$.

Therefore,  for a general $\ell\in S_1$,
$J_\ell:=S_{A,R}/(I_{A,R+1}\colon \ell )=S_{A,R}/I_{A,R}$   does not
depend on $\ell$. 

\smallskip

To prove the converse we assume that $\alpha$ factorizes through $\mathbf E\cap \mathbf \Delta $ and prove that it also factorizes through $\hilbScheme{\PP^\n}{\p}$  exploiting  the description given in  Theorem   \ref{constructionHilbertDegreeRRPlusOne}.

 By Theorems \ref{prop:generalform1}  and  \ref{th:intersection}, for a general $\ell \in S_1$ the module  $J_\ell:=S_{A,R}/(I_{A,R+1}\colon \ell )$ is (locally)  free  of rank $p(R)$ and  does not depend on $\ell$: let us denote it by $J$  and by $\tilde I_{A,R}$ the kernel of  the canonical map $S_{A,R}\rightarrow J$. Then, $\ell \tilde I_{A,R}\subseteq  I_{A,R+1}$ for every $\ell$ in a suitable open subset $U$ of $S_1$. A the ground field $k$ is infinite, $U$ is not contained in a proper $k$-subvector  space of $S_1$, hence $U$ contains a  basis $\ell_0, \ell_1, \dots, \ell_n$ of $S_1$. By construction $\ell_i \tilde I_{A,R}\subseteq  I_{A,R+1}$, so that $S_1 \tilde I_{A,R}\subseteq  I_{A,R+1}$.

 As   $S_{A,R}/\tilde I_{A,R}=J$  is free of rank $\p(R)$ and $S_1 \tilde I_{A,R}\subseteq  I_{A,R+1}$, by Theorem  \ref{constructionHilbertDegreeRRPlusOne} the  pair $(\tilde I_{A,R},  I_{A,R+1})$ corresponds to $\alpha \colon \spec A \rightarrow  \hilbScheme{\PP^\n}{\p}$.

\medskip

Finally, we conclude the proof of Theorem B, (proved so far for $k$ algebraically closed), holds for any base field $k$.  

Let $K$ be the algebraic closure  of $k$  and consider
  the inclusion $P_k:=H^0_*\mathcal O_{\PP_k^{N-1}}\subset  P_K:= H^0_*\mathcal O_{\PP_K^{N-1}}$. According to 
\cite[Proposition 1.3.10]{ega1Springer}, $\hilbScheme{\PP^n_K}{p}=
\hilbScheme{\PP^n_k}{p} \times_{\spec{k}} {\spec{K}}$,   hence any set of equations in $P_k$ which define 
the Hilbert scheme $\hilbScheme{\PP^n_K}{p}\subset
\PP_K^{N-1}$ over $K$
are equations defining the Hilbert scheme $\hilbScheme{\PP^n_k}{p}
\subset \PP^{N-1}_{k}$ over $k$. Since the equations of Theorem B are
defined over $\mathbb Z$ or  $\mathbb Z/n \mathbb Z$ ($n$ being the
characteristic of $k$), hence they belong to  $P_k$, and are valid
on the algebraic closure $K$, the equations also define the Hilbert scheme  over $k$. 
\end{proof}

\section{Extensions of the theorem and an example}
\label{sec:changing-base-field}

The goal of this section is to discuss  the hypothesis   $R\geq r $  
that we assume in  Theorem B. We prove that it is not possible to
use the same set of equations for $R=r-1$ and we explain this phenomenon
in terms of Castelnuovo-Mumford regularity. 

We first show that in general the minimal  standard embedding   $j_r: \ \hilbScheme{\PP^n_k}{p}\hookrightarrow \GrassScheme{S_{r}}{p(r)}$ cannot be  defined by  quadratic equations. Therefore, a fortiori, the  bound we assume on $R$  is sharp. 

We  present in details an explicit example.

\begin{example} \label{nonbasta2}
Each closed point of the Hilbert scheme   $\hilbScheme{\PP^2}{t+2} $   is  either the disjoint union of a line and a point like the one given by the ideal $(xy,xz)$, 
or a line with an embedded   point like the one given by the ideal $(x^2,xy)$.

Note that for any given line  $\ell$ and  point $P\in \ell$,  there is
only one saturated ideal with Hilbert polynomial $t+\bco 2 \ecr$ whose associated primes are those corresponding to $\ell$ and $P$. For instance $(x^2,xy)$ is the only one with associated primes  $(x)$ and $(x,y)$.

  Therefore, by easy arguments, we can see that  $\hilbScheme{\PP^2}{t+2} $  is 
 isomorphic to $\PP^2 \times {\PP^2}^\vee$. Hence, it can be embedded in a projective space as a subscheme cut by quadrics, as  for instance in $\PP^8$ 
by the Segre embedding. However, here we  are interested in the  standard embedding  $\hilbScheme{\PP^2}{t+2} \hookrightarrow \GrassScheme{S_{2}}{4} 
\hookrightarrow \PP^{14}$  of Theorem \ref{constructionHilbertDegreeRRPlusOne}.

  Using  the   computational methods developed in  \cite{BCR}   and   \cite{BLMR}  we obtain  that $\hilbScheme{\PP^2}{t+2} $  is the subscheme of $\PP^{14}$  
defined by an  ideal $\mathcal I$  in  $k[\Delta]$ (where $\Delta $ denotes  the $15$  \Pl\ variables) having as a set of minimal generators 
$15$ \Pl\ relations,       $15$  additional quadrics  and    $28$ cubics.  In the appendix we list  this   set of  equations (except the \Pl\ ones).

By standard computational methods, we checked  that $\mathcal I$ is saturated, hence it contains all the quadrics of $\PP^{14}$  
 that vanish on  $\hilbScheme{\PP^2}{t+2} $,  and computed the  Hilbert polynomial of $k[\Delta]/\mathcal I$ (namely the Hilbert polynomial of $\hilbScheme{\PP^2}{t+2} $ in $\PP^{14}$):
$$P_{\mathcal I}(Z)={{\it Z}}^{4}+\frac{9}{2}\,{{\it Z}}^{3}+7\,{{\it Z}}^{2}+\frac{9}{2}\,{\it Z}
+1.$$
We also computed the Hilbert polynomial of 
$k[\Delta]/\mathcal Q$,  where $\mathcal Q:=(\mathcal I_2)$   is the ideal generated by all the 30 quadrics  vanishing on $\hilbScheme{\PP^2}{t+2} $,  founding a different polynomial
$$P_{\mathcal Q}(Z)={{\it Z}}^{4}+\frac{15}{2}\,{{\it Z}}^{3}+\frac{3}{2}\,{{\it Z}}^{2}+3\,{\it Z}+2. $$

This shows not only that $\mathcal Q \neq \mathcal I$, but also that $\mathcal Q^{sat} \neq \mathcal I$, namely that $\hilbScheme{\PP^2}{t+2} $ 
 embedded  in $\PP^{14}$ is not cut out by quadrics, while by  Theorem B this is true if we consider any standard embedding 
  $\hilbScheme{\PP^2}{t+2} \hookrightarrow \GrassScheme{S_{R+1}}{p(R+1)}$ with $R+1\geq r+1=3$.

\medskip

We highlight two interesting aspects  of   this example and the related computations presented in the appendix.

The first one is about  the minimal embedding   to which the equations obtained in this paper apply, namely    $\hilbScheme{\PP^2}{t+2} \hookrightarrow \GrassScheme{S_{3}}{5}\hookrightarrow \PP^{251}$. 
After Theorem B we know that $\hilbScheme{\PP^2}{t+2}$ is  a degenerate subscheme of $\PP^{251}$  contained in the hyperplanes defined by  the linear forms  $E(\underline x,{\underline z},{\underline v} )$. 
A priori there are  $126=21\cdot 6$ of them, since  we can choose the monomial $\underline x\in k[x,y,z]_5$ in       $21$ ways and  the  set $\underline z$ of  5 distinct monomials in $k[x,y,z]_2$ in $6$ ways (while $\underline v$ is empty). In the paper we do not prove  that  the  linear  forms   $E(\underline x,{\underline z},{\underline v} )$   are always   independent. However, in the present case  they are, as we checked   by  explicit computations. 

    In the appendix we list  a basis for the vector space generated by  the $126$  linear forms. 

\medskip

The second remark concerns the embedding  $\hilbScheme{\PP^2}{t+2}
\hookrightarrow \GrassScheme{S_{2}}{4}\hookrightarrow \PP^{14}$.  In
Example \ref{nonbasta2}  we  proved that the quadratic equations are
not sufficient, but we also show  that  15  quadratic equations
(independent from the \Pl\ ones) do exist. Nevertheless, the sets of
equations  given  by Theorem B {\it  2),  3)} in this case are
empty. Indeed,  it is easy to check even from the simplified
description given in the introduction,  that for every subvector space
$W $ in $S_2$ of codimension $2+2=4$ a  general linear forms $\ell$
satisfies the equality $(W:\ell)=(W:S_1)=\{0\}$, and
the codimension  of $(W:\ell)$ in $S_1$ is $3$;   hence all the points of the Grassmannian satisfy  the conditions described by  the equations of Theorem B.  
Therefore, the equations given by {\it  2),  3)} are not sufficient to describe  $\hilbScheme{\PP^2}{t+2} $ in $ \GrassScheme{S_{2}}{4}$, since they do not exclude for instance the point of the Grassmannian corresponding to $W=(x^2,z^2)_2$, though the Hilbert polynomial of $\Proj(k[x,y,z]/(x^2, z^2)) $  is $4$ and not $t+2$. The following proposition generalizes this observation.

\end{example}

\begin{proposition}\label{semifinale}
Let $p(t)$ be any Hilbert polynomial of subschemes of $\PP^n$, with the sole exclusion of the Hilbert polynomial of 
 $\Proj \left( k[x_0, \dots, x_n]/(x_0, \dots, x_{s-1}, x_s^r)\right)$, for every $s, r$.

Then, the equations of Theorem B in the case $R=r-1$ define a subscheme  $Y$  of  $ \GrassScheme{S_{r}}{p(r)}$ that properly  contains $\hilbScheme{\PP^n}{p} $ 
as a set. 
\end{proposition}

\begin{proof}
First of all we observe that also for $R=r-1$ the equations are
satisfied by all the points of the Hilbert scheme. Indeed, for every
saturated ideal $\mathfrak b$, every integer $m$ and  general linear
form $\ell$, then  $(\mathfrak b_m\colon S_1)=(\mathfrak b_m\colon
\ell)$ as shown in the proof of theorem B. 

Moreover, if $\mathfrak b$ is a saturated ideal with Hilbert
polynomial $p$, then its regularity is $\leq r$ and its Hilbert
function coincides with the Hilbert polynomial also in degree
$r-1$ (see for instance \cite[Remark 2.4]{CMR}). 
 We then  prove the statement showing that there is an ideal $I$ in
 $S$  whose Hilbert polynomial is different from $p$, while   $I_r$
 satisfies the conditions corresponding to the equations of Theorem B
 in the case $R=r-1$.   

Let $\prec$  be the term order Lex in  $k[x_0, \dots, x_n]$ with $x_0 \succ\dots \succ x_n$ 
and  let $J$ be the saturated (non-irrelevant)  Lex-segment ideal with Hilbert
polynomial $p$. It is well known
that
 $J$ has regularity exactly $r$ and that no monomial in its minimal  monomial basis $B$ is divisible by $x_n$; moreover  the $\prec$-minimal monomial $x^\alpha$ 
in  $B$ has degree $r$ and  minimal variable $x_m$  larger than $x_n$ ($x^\alpha$ is a minimal generator of a saturated monomial ideal) and  strictly lower than the maximal variable    $x_M$ of $x^\alpha$ (the cases with  $x_m=x_M$ are those excluded in the statment).

Let $I'$ be the  ideal generated by $B^\ast:=B \setminus \{x^{\alpha} \}$ and let $I$ be the  ideal generated  by $B':=B^\ast \cup \{x_m^r\}$.  By construction $I'$ is a saturated lex-segment ideal and $I, J\supset I'$ with equalities $I_s= J_s= I_s$ for every $s\leq r-1$.

\medskip

Now we prove that  $I$ is saturated. Let $x^\gamma $ be a monomial  in  the saturation of $I$.  Then,  for $v$ sufficiently large,  $x^\gamma x_n^v $  is divisible by some monomial in
 $B'$.  If  $x^\gamma x_n^v $   is divisible by  a monomial in $B^\ast$, then it is an element of the saturated ideal $I'$, so that   $x^\gamma\in I'\subset I$. 
On the other hand, if $x^\gamma x_n^v $   is divisible by $x_m^r$, then also $x^\gamma $ is, so  that $x^\gamma \in I$. Therefore $I$ coincides with its saturation.

\medskip

By construction, $I_r$ is the vector space generated by $\dim(S_r)-p(r)$  monomials of degree $r$ (those of $J_r$ with the only exception that  $x_m^r$ replaces $ x^{\alpha}$),  
so that  
its codimension is $p(r)$.Moreover,   
 for a general linear form $\ell$  since  $I$ is saturated,   and
 $(I_r\colon S_1)=(I'_r\colon S_1)=I'_{r-1}=J_{r-1}$ since the only
 monomial in $I_r\setminus I'_r$ is a minimal generator of $I$, hence
 it cannot be divisible  by  a monomial in $I_{r-1}=I'_{r-1}$.
Since  $(I_r\colon S_1)\subset (I_r\colon \ell)$, whereas $\dim
(I_r\colon S_1)\geq (I_r\colon \ell)$ by Green's theorem and the above
computation, we have  $(I_r\colon S_1)= (I_r\colon \ell)$, ie.
 $I_r$ admits  general linear forms in the sense of Theorem \ref{th:intersection}.
\medskip

We  conclude showing that  the scheme $X$ defined by  $I$ is  not   a $k$-point of
$\hilbScheme{\PP^n}{p(t)} $.   If the Hilbert polynomial of $X$ were
$p(t)$,  then the   dimension of  $I_{r+1}$  should be equal to that
of  $J_{r+1}$.
We now prove that, on the contrary, $\dim(I_{r+1})>\dim(J_{r+1})$.  As $I$ and $J$ are  monomial ideals, we
compare  the two dimensions by comparing  the number of monomials
in $I_{r+1}$ and $J_{r+1}$.  Recall that the  monomial bases of $J$ and $I$ are 
$B = B^\ast \cup\{x^{\alpha}\}$  and $B '= B^\ast \cup\{x_m^r \}$
 and that  $J$  is the saturated  lexsegment ideal with regularity $r$, $x^\alpha$ 
has degree $r$ and it is the $\prec$-minimal monomial in  $B$, 
its  maximal variable being 
 $\prec$-larger than  the minimal variable $x_m$.

Both $J_{r+1}$ and $I_{r+1}$ contains the monomials $S_1B^\ast$. Then it is sufficient to consider those that are not in this set.  As $J$ is a lexsegment ideal, we know that there are exactly $n-m+1$ monomials in $J_{r+1}\setminus S_1B^\ast$ (they are $ x_jx^\alpha$ with $j=m, \dots, n$). 

We  now  observe that these monomials are $\prec$-lower that all those in 
$S_1B^\ast$ and that  by construction  $x_mx^\alpha \succeq
x_Mx_m^r$.  Therefore, the $n-M+1$ monomials $x_jx_m^r\in  I_{r+1}$
with $j=x_M, \dots, x_n $, do not belong to $S_1B^\ast$. Then $\dim
( I   _{r+1})-\dim (J_{r+1}) \geq m-M>0$.
\ecr
\end{proof}
\ecr

In  the proof of Proposition \ref{semifinale} we present for almost
every Hilbert polynomial $p$ an explicit example of  an ideal $I$ is $S$ such that $I_r$ is  a $k$-point of $
\GrassScheme{S_{r}}{p(r)}\setminus \hilbScheme{\PP^n}{p} $ that
satisfies the equations of Theorem B in the case $R=r-1$.   We observe
that the regularity  of the scheme defined by $I$  is  equal to  or larger   than
the Gotzmann number $r$ of $p$.   It is not by chance that this
happens; indeed the same happens for every    ideal $I$  such that    $I_r$ is   a $k$-point of $
\GrassScheme{S_{r}}{p(r)}\setminus \hilbScheme{\PP^n}{p} $ that    satisfies the  equations given by
Theorem B for $R=r-1$. In fact   
  the codimension of  $I_t$ in $S_t$   coincides with  $p(t)$ for
both $t=r-1$ and $t=r$; if we also assume that the regularity of $I$ is
at most $r-1$, the above conditions imply that the Hilbert polynomial of
$k[x_0, \dots, x_n]/I$ is $p$.  Therefore, in a suitable open subset
of the Grassmannian $ \GrassScheme{S_{r}}{p(r)}$ the equations given
in Theorem B in the case $R=r-1$  define   the open subscheme of
Hilbert scheme  
  $\hilbScheme{\PP^n}{p} $ where the regularity is upper bounded by $r-1$.

Following \cite{BBR}, we denote by    $\hilbScheme{\PP^n}{p,[r']} $ the  {\it Hilbert scheme with regularity upper bounded by $r'$},  namely the open subscheme of $\hilbScheme{\PP^n}{p} $  that parametrises the flat families with  regularity lower than or equal to $r'$; for the main features of this scheme we refer to      \cite{BBR}  and the references therein.

\begin{theorem}\label{reglim} Let $p$ be any Hilbert polynomial of subschemes in $\PP^n$ and $r$  be its Gotzmann number. Let moreover $r'$  and   $R$ be  integers such that   $r'\leq R\leq  r-1$.

Then, the Hilbert scheme $\hilbScheme{\PP^n}{p,[r'] } $
is the locally closed subscheme of   $ \PP^{D(R+1)}$ defined by the equations given in  Theorem B  and a suitable  set of linear inequalities.
\end{theorem}

\begin{proof} The result is  a  straightforward   consequence of the
  results of \cite{BBR} and of  \S  7 about the meaning of the linear
  and quadratic equations of Theorem B.   We outline the  proof
  simply considering $k$-points, but the arguments can be
  generalized to families.


By   \cite[Theorem 1.2 (ii)]{BBR},  for every  integer $m  \geq r'$,    $\hilbScheme{\PP^n}{p,[r'] } $
 can be embedded as a closed subscheme of   $
  \GrassScheme{S_{m}}{p(m)} \setminus L^{r',m}_{p}$ where    $L^{r',m}_{p}$   is a subscheme  of 
$ \GrassScheme{S_{m}}{p(m)}  $ cut by 
a linear space under the \Pl\  embedding. 

Moreover,  by  \cite[Lemma 7.1]{BBR},  the  $k$-points of
$\hilbScheme{\PP^n}{p,[r'] } $ are exactly the $k$-points    $V$  of
$\GrassScheme{S_{m}}{p(m)} \setminus L^{r',m}_{p} $ such  that the ideal
$I:=(V)^{sat}\subset k[x_0, \dots, x_n]$   satisfy  the condition
${\rm  codim}(I_t)=p(t)$ for  every integer $t\geq r'$  (so that in particular $I_{m}=V$);   this condition is equivalent to the apparently slighter condition ${\rm  codim}(I_t)=p(t)$  for two consecutive integers $t=t_0\geq r'$ and
$t=t_0+1$.  Here we are interested in the case $m   = R+1$ and  describe the $k$-points of $\hilbScheme{\PP^n}{p,[r'] } $ as $k$-points    $V$  of
$\GrassScheme{S_{R+1}}{p(R+1)} \setminus L^{r',R+1}_{p} $  such that $I:=(V)^{sat}$ satisfy the above condition for $t=t_0=R$ and $t=t_0+1=R+1$. 


\medskip

 In \S 7 we proved 
 that a 
$k$-point  $V$   of  $\GrassScheme{S_{R+1}}{p(R+1)} $  satisfies  the
equations of Theorem B  if and only if the ideal   $I:=(V)^{sat}$  satisfies
the conditions  ${\rm  codim}(I_{R+1})=p(R+1)$  and   ${\rm
  codim}(I_{R})={\rm
  codim}(I_{R+1}\colon S_1)={\rm
  codim}((I_{R+1}\colon \ell))= p(R)$ for a general $\ell \in S_1$.

Therefore, for every  $k$-point $V$ of $\GrassScheme{S_{R+1}}{p(R+1)}
\setminus L^{r',R+1}_{p} $,  we see that $V$ is a point of
$\hilbScheme{\PP^n}{p,[r'] } $ if and only if $I:=(V)^{sat}$ satisfies the equations of Theorem B.
\end{proof}

\bibliographystyle{plain} 

\section{Appendix}

The following are $15$ quadrics \noindent $F_i$ and 28 cubics \noindent $G_j$ that, together with the \Pl\ relations, generate the saturated ideal $\mathcal I$ of $\hilbScheme{\PP^2}{t+2} \hookrightarrow  \PP^{14}$. Each  \Pl\ variable  corresponds to the choice of 4  monomials   in $k[x,y,z]_2$: we denote it by the position of the 2 missing monomials   in the list    ordered  in decreasing degrevlex  order $x^2, xy,y^2,xz, yz, z^2$.

%
%
%
%
%
%
%
%
%
%
%
%
%
%
%
%
%
%
%
%
%
%
%
%
%
%
%
%

\bigskip
\noindent $F_{{1}}=-P_{{3,5}}P_{{5,6}}+{P_{{3,6}}}^{2}$

\noindent $F_{{2}}=-P_{{2,5}}P_{{5,6}}+2 P_{{2,6}}P_{{3,6}}-
P_{{3,4}}P_{{5,6}}-P_{{3,5}}P_{{4,6}}$

\noindent $F_{{3}}=-2 P_{{2,3}}P_{{5,6}}+P_{{2,5}}P_{{3,6}}-
2 P_{{3,4}}P_{{3,6}}-P_{{3,5}}P_{{4,5}}$

\noindent $F_{{4}}=-P_{{1,5}}P_{{5,6}}+2 P_{{1,6}}P_{{3,6}}-
P_{{2,4}}P_{{5,6}}-P_{{2,5}}P_{{4,6}}+{P_{{2,
6}}}^{2}-P_{{3,4}}P_{{4,6}}$

\noindent $F_{{5}}=-4 P_{{1,3}}P_{{5,6}}+2 P_{{1,5}}P_{{3,6
}}-3 P_{{2,4}}P_{{3,6}}+P_{{2,5}}P_{{2,6}}-P
_{{2,5}}P_{{4,5}}-P_{{3,4}}P_{{4,5}}$

\noindent $F_{{6}}=-2 P_{{1,3}}P_{{3,6}}+P_{{1,5}}P_{{3,5}}+
P_{{2,3}}P_{{2,6}}-P_{{2,3}}P_{{4,5}}-P_{{2,4
}}P_{{3,5}}+{P_{{3,4}}}^{2}$

\noindent $F_{{7}}=-P_{{1,4}}P_{{5,6}}-P_{{1,5}}P_{{4,6}}+2 
P_{{1,6}}P_{{2,6}}-P_{{2,4}}P_{{4,6}}$

\noindent $F_{{8}}=-3 P_{{1,2}}P_{{5,6}}-2 P_{{1,3}}P_{{4,6
}}-2 P_{{1,4}}P_{{3,6}}+3 P_{{1,5}}P_{{2,6}}-
P_{{1,5}}P_{{4,5}}-P_{{2,4}}P_{{2,6}}-P_{{2,4
}}P_{{4,5}}$

\noindent $F_{{9}}=-4 P_{{1,2}}P_{{3,6}}+2 P_{{1,3}}P_{{2,6
}}-3 P_{{1,3}}P_{{4,5}}+P_{{1,5}}P_{{2,5}}-P
_{{2,4}}P_{{2,5}}+P_{{2,4}}P_{{3,4}}$

\noindent $F_{{10}}=2 P_{{1,2}}P_{{3,5}}-P_{{1,3}}P_{{2,5}}-
2 P_{{1,3}}P_{{3,4}}+P_{{2,3}}P_{{2,4}}$

\noindent $F_{{11}}=-P_{{1,4}}P_{{4,6}}+{P_{{1,6}}}^{2}$

\noindent $F_{{12}}=-P_{{1,2}}P_{{4,6}}-P_{{1,4}}P_{{2,6}}-
P_{{1,4}}P_{{4,5}}+2 P_{{1,5}}P_{{1,6}}$

\noindent $F_{{13}}=-P_{{1,2}}P_{{2,6}}-P_{{1,2}}P_{{4,5}}+2
 P_{{1,3}}P_{{1,6}}-P_{{1,4}}P_{{2,5}}+P_{{1
,4}}P_{{3,4}}+{P_{{1,5}}}^{2}$

\noindent $F_{{14}}=-P_{{1,2}}P_{{2,5}}+2 P_{{1,2}}P_{{3,4}}
+2 P_{{1,3}}P_{{1,5}}-P_{{1,3}}P_{{2,4}}$

\noindent $F_{{15}}=-P_{{1,2}}P_{{2,3}}+{P_{{1,3}}}^{2}$

\noindent $G_{{1}}=P_{{1,6}}{P_{{5,6}}}^{2}-P_{{2,6}}P_{{4,6}
}P_{{5,6}}+P_{{3,6}}{P_{{4,6}}}^{2}$

\noindent $G_{{2}}=2 P_{{1,6}}P_{{3,6}}P_{{5,6}}-P_{{5,6}}
P_{{2,5}}P_{{4,6}}-P_{{3,4}}P_{{4,6}}P_{{5,6}
}+P_{{3,5}}{P_{{4,6}}}^{2}$

\noindent $G_{{3}}=-4 P_{{1,3}}{P_{{5,6}}}^{2}+4 P_{{1,5}}P
_{{3,6}}P_{{5,6}}-2 P_{{5,6}}P_{{2,4}}P_{{3,6}}-
P_{{2,5}}P_{{4,5}}P_{{5,6}}-P_{{3,4}}P_{{4,5}
}P_{{5,6}}+P_{{3,5}}P_{{4,5}}P_{{4,6}}$

\noindent $G_{{4}}=-P_{{1,3}}P_{{3,6}}P_{{5,6}}+P_{{1,5}}
P_{{3,5}}P_{{5,6}}+P_{{2,3}}P_{{3,6}}P_{{4,6}
}-P_{{2,3}}P_{{4,5}}P_{{5,6}}-P_{{5,6}}P_{{2,
4}}P_{{3,5}}+{P_{{3,4}}}^{2}P_{{5,6}}$

\noindent $G_{{5}}=P_{{1,4}}{P_{{5,6}}}^{2}-P_{{1,5}}P_{{4,6}
}P_{{5,6}}+2 P_{{1,6}}P_{{3,6}}P_{{4,6}}-P_{
{2,4}}P_{{4,6}}P_{{5,6}}$

\noindent $G_{{6}}=-2 P_{{1,3}}P_{{4,6}}P_{{5,6}}+P_{{1,4}}
P_{{3,6}}P_{{5,6}}+3 P_{{1,5}}P_{{3,6}}P_{{4
,6}}-P_{{1,5}}P_{{4,5}}P_{{5,6}}-P_{{2,4}}P_{
{2,6}}P_{{5,6}}-P_{{5,6}}P_{{2,4}}P_{{4,5}}$

\noindent $G_{{7}}=P_{{5,6}}P_{{1,2}}P_{{4,6}}-P_{{1,4}}
P_{{2,6}}P_{{5,6}}+2 P_{{1,4}}P_{{3,6}}P_{{4
,6}}-P_{{1,4}}P_{{4,5}}P_{{5,6}}$

\noindent $G_{{8}}=P_{{1,2}}P_{{3,6}}P_{{5,6}}-P_{{1,3}}
P_{{2,6}}P_{{5,6}}+P_{{1,3}}P_{{3,6}}P_{{4,6}
}$

\noindent $G_{{9}}=P_{{1,2}}P_{{3,6}}P_{{4,6}}-P_{{1,3}}
P_{{1,6}}P_{{5,6}}$

\noindent $G_{{10}}=2 P_{{1,3}}P_{{3,5}}P_{{5,6}}-P_{{2,3}}
P_{{2,5}}P_{{5,6}}-P_{{2,3}}P_{{3,4}}P_{{5,6}
}+P_{{2,3}}P_{{3,5}}P_{{4,6}}$

\noindent $G_{{11}}=2 P_{{3,5}}P_{{1,2}}P_{{5,6}}-P_{{1,3}}
P_{{2,5}}P_{{5,6}}-P_{{1,3}}P_{{3,4}}P_{{5,6}
}+P_{{1,3}}P_{{3,5}}P_{{4,6}}$

\noindent $G_{{12}}=P_{{1,2}}P_{{3,4}}P_{{5,6}}+P_{{1,2}}
P_{{3,5}}P_{{4,6}}-P_{{1,3}}P_{{2,4}}P_{{5,6}
}$

\noindent $G_{{13}}=P_{{1,2}}P_{{2,6}}P_{{4,6}}-P_{{1,4}}
P_{{1,5}}P_{{5,6}}+P_{{1,4}}P_{{3,4}}P_{{4,6}
}$

\noindent $G_{{14}}=-P_{{2,5}}P_{{1,2}}P_{{4,6}}-2 P_{{1,2}}
P_{{3,4}}P_{{4,6}}+2 P_{{1,3}}P_{{1,4}}P_{{5
,6}}+P_{{1,3}}P_{{2,4}}P_{{4,6}}$

\noindent $G_{{15}}=P_{{1,4}}P_{{1,2}}P_{{5,6}}-P_{{1,2}}
P_{{1,5}}P_{{4,6}}-P_{{2,4}}P_{{1,2}}P_{{4,6}
}+2 P_{{1,3}}P_{{1,4}}P_{{4,6}}$

\noindent $G_{{16}}=-P_{{1,3}}P_{{3,5}}P_{{3,6}}+{P_{{2,3}}}^
{2}P_{{5,6}}+P_{{2,3}}P_{{3,4}}P_{{3,6}}$

\noindent $G_{{17}}=-P_{{1,2}}P_{{3,5}}P_{{3,6}}+P_{{1,3}}
P_{{2,3}}P_{{5,6}}+P_{{1,3}}P_{{3,4}}P_{{3,6}
}$

\noindent $G_{{18}}=-2 P_{{1,2}}P_{{3,4}}P_{{3,6}}-P_{{3,5}}
P_{{1,2}}P_{{4,5}}+P_{{1,3}}P_{{2,4}}P_{{3,6}
}$

\noindent $G_{{19}}=-2 P_{{1,2}}P_{{2,4}}P_{{3,6}}-P_{{2,5}}
P_{{1,2}}P_{{4,5}}-2 P_{{1,2}}P_{{3,4}}P_{{4
,5}}+4 P_{{1,3}}P_{{1,4}}P_{{3,6}}+P_{{1,3}}
P_{{2,4}}P_{{4,5}}$

\noindent $G_{{20}}=-P_{{1,3}}{P_{{3,5}}}^{2}+{P_{{2,3}}}^{2}
P_{{3,6}}+P_{{2,3}}P_{{3,4}}P_{{3,5}}$

\noindent $G_{{21}}=-P_{{1,2}}{P_{{3,5}}}^{2}+P_{{1,3}}P_{{2,
3}}P_{{3,6}}+P_{{1,3}}P_{{3,4}}P_{{3,5}}$

\noindent $G_{{22}}=2 P_{{1,2}}P_{{2,3}}P_{{3,6}}-P_{{3,5}}
P_{{1,2}}P_{{2,5}}+P_{{1,3}}P_{{2,4}}P_{{3,5}
}$

\noindent $G_{{23}}=P_{{1,2}}P_{{1,3}}P_{{3,6}}-P_{{1,2}}
P_{{1,5}}P_{{3,5}}+P_{{1,3}}P_{{1,4}}P_{{3,5}
}$

\noindent $G_{{24}}=P_{{1,2}}P_{{1,3}}P_{{4,6}}-2 P_{{1,2}}
P_{{1,4}}P_{{3,6}}+P_{{1,3}}P_{{1,4}}P_{{2,6}
}-P_{{1,3}}P_{{1,4}}P_{{4,5}}$

\noindent $G_{{25}}=-{P_{{1,2}}}^{2}P_{{3,6}}+P_{{1,2}}P_{{1,
3}}P_{{2,6}}-P_{{1,4}}P_{{1,2}}P_{{3,5}}+P_{{
1,3}}P_{{1,4}}P_{{3,4}}$

\noindent $G_{{26}}={P_{{1,2}}}^{2}P_{{4,6}}-P_{{1,2}}P_{{1,4
}}P_{{2,6}}-P_{{1,2}}P_{{1,4}}P_{{4,5}}+2 P_
{{1,3}}P_{{1,4}}P_{{1,6}}$

\noindent $G_{{27}}=2 P_{{1,2}}P_{{1,3}}P_{{1,6}}-P_{{1,4}}
P_{{1,2}}P_{{2,5}}+P_{{1,3}}P_{{1,4}}P_{{2,4}
}$

\noindent $G_{{28}}={P_{{1,2}}}^{2}P_{{1,6}}-P_{{1,2}}P_{{1,4
}}P_{{1,5}}+P_{{1,3}}{P_{{1,4}}}^{2}.$

\bigskip

The following are $126$ independent linear forms  \noindent $L_i$   that belong to the saturated ideal  of  $\hilbScheme{\PP^2}{t+2} \hookrightarrow 
 \PP^{251}$. Each  \Pl\ variable  corresponds to the choice of 5 monomials   in $k[x,y,z]_3$: we denote it by the position of these 5 monomials in the list    ordered  in decreasing degrevlex order.
 
 \bigskip

\noindent $L_{{1}}=P_{{1,2,3,5,6}}$,  \
\noindent $L_{{2}}=P_{{1,2,3,5,8}}$,  \
\noindent $L_{{3}}=P_{{1,2,3,6,8}}$,  \
\noindent $L_{{4}}=P_{{1,2,5,6,8}}$,  \
\noindent $L_{{5}}=P_{{1,3,5,6,8}}$,  

\noindent $L_{{6}}=P_{{2,3,4,6,7}}$,  \
\noindent $L_{{7}}=P_{{2,3,4,6,9}}$, \
\noindent $L_{{8}}=P_{{2,3,4,7,9}}$,  \
\noindent $L_{{9}}=P_{{2,3,5,6,8}}$,  \
\noindent $L_{{10}}=P_{{2,3,6,7,9}}$,  

\noindent $L_{{11}}=P_{{2,4,6,7,9}}$,  \
\noindent $L_{{12}}=P_{{3,4,6,7,9}}$,  \
\noindent $L_{{13}}=P_{{5,6,7,8,9}}$,  \
\noindent $L_{{14}}=P_{{5,6,7,8,10}}$,  \
\noindent $L_{{15}}=P_{{5,6,7,9,10}}$,  

\noindent $L_{{16}}=P_{{5,6,8,9,10}}$,  \
\noindent $L_{{17}}=P_{{5,7,8,9,10}}$,  \
\noindent $L_{{18}}=P_{{6,7,8,9,10}}$,  
\noindent $L_{{19}}=3 P_{{1,2,4,5,6}}+2 P_{{1,2,3,5,7}}$,  \

\noindent $L_{{20}}=-P_{{1,2,5,6,7}}+6 P_{{1,2,4,5,8}}$,  
\noindent $L_{{21}}=P_{{1,2,5,6,7}}+2 P_{{1,2,3,5,9}}$, \
\noindent $L_{{22}}=-P_{{1,2,5,7,8}}+3 P_{{1,2,3,5,10}}$,  

\noindent $L_{{23}}=2 P_{{1,2,5,7,8}}+P_{{1,2,5,6,9}}$,  \
\noindent $L_{{24}}=-2 P_{{1,2,5,8,9}}+3 P_{{1,2,5,6,10}}$,  
\noindent $L_{{25}}=2 P_{{2,3,4,5,7}}+3 P_{{1,3,4,6,7}}$, \ 

\noindent $L_{{26}}=-2 P_{{2,5,8,9,10}}+3 P_{{1,6,8,9,10}}$,
\noindent $L_{{27}}=-P_{{3,4,5,6,7}}+2 P_{{2,3,4,7,8}}$,  \
\noindent $L_{{28}}=P_{{3,4,5,6,7}}+6 P_{{1,3,4,7,9}}$,  

\noindent $L_{{29}}=-P_{{3,4,5,7,8}}+3 P_{{1,3,4,7,10}}$,  \
\noindent $L_{{30}}=-P_{{3,4,6,7,8}}+6 P_{{2,3,4,7,10}}$,  
\noindent $L_{{31}}=P_{{3,4,6,7,8}}+2 P_{{3,4,5,7,9}}$,  \

\noindent $L_{{32}}=-2 P_{{3,4,7,8,9}}+3 P_{{3,4,6,7,10}}$,  
\noindent $L_{{33}}=-2 P_{{3,5,8,9,10}}+P_{{2,6,8,9,10}}$,  \

\noindent $L_{{34}}=-P_{{3,5,8,9,10}}+3 P_{{1,7,8,9,10}}$,  
\noindent $L_{{35}}=-6 P_{{4,5,8,9,10}}+P_{{3,6,8,9,10}}$,  \

\noindent $L_{{36}}=-3 P_{{4,5,8,9,10}}+P_{{2,7,8,9,10}}$,  
\noindent $L_{{37}}=-3 P_{{4,6,8,9,10}}+2 P_{{3,7,8,9,10}}$,  \

\noindent $L_{{38}}=3 P_{{1,3,4,5,6}}+6 P_{{1,2,4,5,7}}+
P_{{1,2,3,6,7}}$,  
\noindent $L_{{39}}=-3 P_{{1,4,5,6,8}}+P_{{1,2,6,7,8}}+2 
P_{{1,2,5,7,9}}$,  \

\noindent $L_{{40}}=3 P_{{1,4,5,6,8}}-P_{{1,2,6,7,8}}+6 
P_{{1,2,4,5,10}}$,  
\noindent $L_{{41}}=3 P_{{1,4,5,6,8}}+2 P_{{1,3,5,7,8}}+
P_{{1,3,5,6,9}}$,  \

\noindent $L_{{42}}=-P_{{1,5,6,7,8}}-P_{{1,2,6,8,9}}+3 P_
{{1,2,5,7,10}}$,  
\noindent $L_{{43}}=P_{{1,5,6,7,8}}-2 P_{{1,3,5,8,9}}+3 
P_{{1,3,5,6,10}}$,  \

\noindent $L_{{44}}=P_{{1,5,6,8,9}}-3 P_{{1,2,6,8,10}}+6 
P_{{1,2,5,9,10}}$,  \
\noindent $L_{{45}}=P_{{2,3,4,5,6}}+6 P_{{1,3,4,5,7}}+3 
P_{{1,2,4,6,7}}$,  \

\noindent $L_{{46}}=P_{{2,3,4,6,8}}+2 P_{{2,3,4,5,9}}+3 
P_{{1,3,4,6,9}}$,  
\noindent $L_{{47}}=-P_{{2,4,5,6,7}}+P_{{2,3,4,6,8}}+3 P_
{{1,3,4,7,8}}$,  \

\noindent $L_{{48}}=3 P_{{2,4,5,6,8}}+2 P_{{2,3,5,7,8}}+
P_{{2,3,5,6,9}}$,  
\noindent $L_{{49}}=P_{{2,5,6,7,8}}-2 P_{{2,3,5,8,9}}+3 
P_{{2,3,5,6,10}}$,  \

\noindent $L_{{50}}=P_{{2,5,6,8,10}}-3 P_{{1,5,6,9,10}}+6 
P_{{1,2,8,9,10}}$,  
\noindent $L_{{51}}=-2 P_{{3,4,5,7,8}}-P_{{3,4,5,6,9}}+3 
P_{{1,4,6,7,9}}$,  \

\noindent $L_{{52}}=2 P_{{3,4,5,7,8}}-P_{{2,4,6,7,8}}+3 
P_{{2,3,4,6,10}}$,  
\noindent $L_{{53}}=P_{{3,5,6,8,9}}-2 P_{{2,5,7,8,9}}+3 
P_{{1,6,7,8,9}}$,  \

\noindent $L_{{54}}=P_{{3,5,6,8,10}}-P_{{2,5,6,9,10}}+3 P
_{{1,3,8,9,10}}$,
\noindent $L_{{55}}=P_{{3,5,6,8,10}}-2 P_{{2,5,7,8,10}}+3 
P_{{1,6,7,8,10}}$,  \

\noindent $L_{{56}}=-P_{{4,5,6,7,9}}-2 P_{{2,4,7,8,9}}+3 
P_{{2,4,6,7,10}}$,  
\noindent $L_{{57}}=P_{{4,5,6,7,9}}-P_{{3,4,6,8,9}}+3 P_{
{3,4,5,7,10}}$,  \

\noindent $L_{{58}}=3 P_{{4,5,6,8,9}}-2 P_{{3,5,7,8,9}}+
P_{{2,6,7,8,9}}$,  
\noindent $L_{{59}}=3 P_{{4,5,6,8,10}}-P_{{3,5,6,9,10}}+6 
P_{{1,4,8,9,10}}$,  \

\noindent $L_{{60}}=3 P_{{4,5,6,8,10}}-P_{{3,5,6,9,10}}+2 
P_{{2,3,8,9,10}}$,  
\noindent $L_{{61}}=3 P_{{4,5,6,8,10}}-2 P_{{3,5,7,8,10}}+
P_{{2,6,7,8,10}}$,  \

\noindent $L_{{62}}=P_{{4,6,7,8,9}}-6 P_{{3,4,7,8,10}}+3 
P_{{3,4,6,9,10}}$,  
\noindent $L_{{63}}=3 P_{{4,6,7,8,10}}-P_{{3,6,7,9,10}}+6 
P_{{3,4,8,9,10}}$,  \

\noindent $L_{{64}}=-P_{{1,3,5,6,7}}+3 P_{{1,3,4,5,8}}+3 
P_{{1,2,4,6,8}}+P_{{1,2,3,7,8}}$,  

\noindent $L_{{65}}=-P_{{1,3,5,6,7}}+6 P_{{1,3,4,5,8}}+3 
P_{{1,2,4,6,8}}+3 P_{{1,2,4,5,9}}$,  

\noindent $L_{{66}}=2 P_{{1,3,5,6,7}}-6 P_{{1,3,4,5,8}}-3 
P_{{1,2,4,6,8}}+P_{{1,2,3,6,9}}$,  \

\noindent $L_{{67}}=-6 P_{{1,4,5,6,8}}-2 P_{{1,3,5,7,8}}+
P_{{1,2,6,7,8}}+3 P_{{1,2,3,6,10}}$,  \

\noindent $L_{{68}}=-3 P_{{1,4,5,6,8}}-P_{{1,3,5,7,8}}+P_
{{1,2,6,7,8}}+P_{{1,2,3,8,9}}$,  \

\noindent $L_{{69}}=6 P_{{1,4,5,7,8}}+3 P_{{1,4,5,6,9}}+
P_{{1,3,6,7,8}}+2 P_{{1,3,5,7,9}}$,  \

\noindent $L_{{70}}=-P_{{1,5,6,7,8}}+P_{{1,3,5,8,9}}-P_{{1
,2,6,8,9}}+3 P_{{1,2,3,8,10}}$,  \

\noindent $L_{{71}}=2 P_{{1,5,7,8,9}}+3 P_{{1,5,6,7,10}}-3 
P_{{1,3,6,8,10}}+6 P_{{1,3,5,9,10}}$,  

\noindent $L_{{72}}=2 P_{{2,3,4,5,8}}+3 P_{{1,3,4,6,8}}+6 
P_{{1,3,4,5,9}}+3 P_{{1,2,4,6,9}}$,  

\noindent $L_{{73}}=2 P_{{2,3,5,8,10}}+3 P_{{1,5,6,7,10}}-3 
P_{{1,3,6,8,10}}+6 P_{{1,2,7,8,10}}$,  

\noindent $L_{{74}}=P_{{2,4,5,6,7}}-P_{{2,3,4,6,8}}-P_{{2,
3,4,5,9}}+3 P_{{1,2,4,7,9}}$,  

\noindent $L_{{75}}=-P_{{2,4,5,6,8}}-6 P_{{1,4,5,7,8}}-3 
P_{{1,4,5,6,9}}+P_{{1,2,6,7,9}}$,  

\noindent $L_{{76}}=P_{{2,4,5,6,8}}-P_{{1,3,6,7,8}}+6 P_{
{1,3,4,5,10}}+3 P_{{1,2,4,6,10}}$,  

\noindent $L_{{77}}=2 P_{{2,4,5,7,8}}+P_{{2,4,5,6,9}}+3 
P_{{1,4,6,7,8}}+6 P_{{1,4,5,7,9}}$,  

\noindent $L_{{78}}=-P_{{3,4,5,6,8}}-2 P_{{2,4,5,7,8}}-P_
{{2,4,5,6,9}}+P_{{1,3,6,7,9}}$,  

\noindent $L_{{79}}=-2 P_{{3,4,5,7,8}}+P_{{3,4,5,6,9}}+P_
{{2,4,6,7,8}}+2 P_{{2,3,4,8,9}}$,  

\noindent $L_{{80}}=2 P_{{3,4,5,7,8}}+P_{{3,4,5,6,9}}+P_{
{2,4,6,7,8}}+2 P_{{2,4,5,7,9}}$,  

\noindent $L_{{81}}=2 P_{{3,4,5,8,9}}+3 P_{{3,4,5,6,10}}-
P_{{2,4,6,8,9}}+6 P_{{2,3,4,8,10}}$,  

\noindent $L_{{82}}=P_{{3,5,6,7,10}}+6 P_{{3,4,5,8,10}}-3 
P_{{2,4,6,8,10}}+2 P_{{2,3,7,8,10}}$,  

\noindent $L_{{83}}=P_{{3,5,6,8,10}}-P_{{2,5,7,8,10}}-P_{{
2,5,6,9,10}}+3 P_{{1,5,7,9,10}}$,  

\noindent $L_{{84}}=-P_{{4,5,6,7,8}}+2 P_{{3,4,5,8,9}}-P_
{{2,4,6,8,9}}+P_{{2,3,6,7,10}}$,  

\noindent $L_{{85}}=-P_{{4,5,6,7,9}}+P_{{3,4,6,8,9}}-P_{{2
,4,7,8,9}}+3 P_{{2,3,4,9,10}}$,  

\noindent $L_{{86}}=P_{{4,5,6,8,9}}+6 P_{{3,4,5,8,10}}-3 
P_{{2,4,6,8,10}}+P_{{2,3,6,9,10}}$,  

\noindent $L_{{87}}=P_{{4,5,6,8,9}}-P_{{3,5,6,7,10}}-6 P_
{{1,4,7,8,10}}+3 P_{{1,4,6,9,10}}$,  

\noindent $L_{{88}}=3 P_{{4,5,6,8,10}}-P_{{3,5,7,8,10}}-P
_{{3,5,6,9,10}}+P_{{2,5,7,9,10}}$,  

\noindent $L_{{89}}=6 P_{{4,5,6,8,10}}-2 P_{{3,5,7,8,10}}-
P_{{3,5,6,9,10}}+3 P_{{1,6,7,9,10}}$,  

\noindent $L_{{90}}=2 P_{{4,5,7,8,9}}-3 P_{{4,5,6,7,10}}-6 
P_{{2,4,7,8,10}}+3 P_{{2,4,6,9,10}}$,  

\noindent $L_{{91}}=3 P_{{4,5,7,8,10}}-3 P_{{4,5,6,9,10}}-
P_{{3,6,7,8,10}}+P_{{3,5,7,9,10}}$,  

\noindent $L_{{92}}=6 P_{{4,5,7,8,10}}-3 P_{{4,5,6,9,10}}-2 
P_{{3,6,7,8,10}}+P_{{2,6,7,9,10}}$,  

\noindent $L_{{93}}=6 P_{{4,5,7,8,10}}-3 P_{{4,5,6,9,10}}-
P_{{3,6,7,8,10}}+3 P_{{2,4,8,9,10}}$,  

\noindent $L_{{94}}=P_{{1,5,6,7,9}}-6 P_{{1,4,5,8,9}}+9 
P_{{1,4,5,6,10}}-2 P_{{1,3,6,8,9}}+6 P_{{1,3,5,7,10}}$,  

\noindent $L_{{95}}=-P_{{2,3,5,6,7}}+2 P_{{2,3,4,5,8}}-3 
P_{{1,4,5,6,7}}+6 P_{{1,3,4,6,8}}+6 P_{{1,2,4,7,8}}$,  

\noindent $L_{{96}}=-4 P_{{2,4,5,6,8}}-2 P_{{2,3,5,7,8}}+3 
P_{{1,4,5,6,9}}+3 P_{{1,3,6,7,8}}+6 P_{{1,2,4,8,9}}$,  

\noindent $L_{{97}}=2 P_{{2,4,5,8,9}}+6 P_{{2,4,5,6,10}}-
P_{{2,3,6,8,9}}-3 P_{{1,4,6,8,9}}+18 P_{{1,3,4,8,10}}$,  

\noindent $L_{{98}}=-P_{{2,5,6,7,8}}+2 P_{{2,3,5,8,9}}+9 
P_{{1,4,5,6,10}}-3 P_{{1,3,6,8,9}}+18 P_{{1,2,4,8,10}}$,  

\noindent $L_{{99}}=-P_{{2,5,6,7,8}}+2 P_{{2,3,5,8,9}}+3 
P_{{1,5,6,7,9}}-3 P_{{1,3,6,8,9}}+6 P_{{1,2,7,8,9}}$,  

\noindent $L_{{100}}=P_{{2,5,6,7,9}}-2 P_{{2,4,5,8,9}}+3 
P_{{2,4,5,6,10}}-6 P_{{1,4,6,8,9}}+18 P_{{1,4,5,7,10}}$,  

\noindent $L_{{101}}=2 P_{{2,5,6,7,10}}+6 P_{{2,4,5,8,10}}-
P_{{2,3,6,8,10}}-9 P_{{1,4,6,8,10}}+6 P_{{1,3,7,8,10}}$,  

\noindent $L_{{102}}=3 P_{{3,4,5,6,8}}+6 P_{{2,4,5,7,8}}+3 
P_{{2,4,5,6,9}}+P_{{2,3,6,7,8}}+2 P_{{2,3,5,7,9}}$,

\noindent $L_{{103}}=-2 P_{{3,5,6,7,8}}+5 P_{{2,4,5,8,9}}-3 
P_{{2,4,5,6,10}}-3 P_{{1,4,6,8,9}}+9 P_{{1,2,4,9,10}}$,

\noindent $L_{{104}}=-P_{{3,5,6,8,9}}+2 P_{{2,5,7,8,9}}+3 
P_{{2,5,6,7,10}}-9 P_{{1,4,6,8,10}}+18 P_{{1,4,5,9,10
}}$,

\noindent $L_{{105}}=-P_{{3,5,6,8,9}}+2 P_{{2,5,7,8,9}}+3 
P_{{2,5,6,7,10}}-3 P_{{2,3,6,8,10}}+6 P_{{2,3,5,9,10}
}$,

\noindent $L_{{106}}=-3 P_{{4,5,6,7,8}}+P_{{3,5,6,7,9}}+6 
P_{{3,4,5,8,9}}-3 P_{{2,4,6,8,9}}+2 P_{{2,3,7,8,9}}$,

\noindent $L_{{107}}=-4 P_{{4,5,6,8,9}}+2 P_{{3,5,7,8,9}}+3 
P_{{3,5,6,7,10}}-3 P_{{2,4,6,8,10}}+6 P_{{2,4,5,9,10}
}$,

\noindent $L_{{108}}=-2 P_{{4,5,7,8,9}}+6 P_{{4,5,6,7,10}}+
P_{{3,6,7,8,9}}-3 P_{{3,4,6,8,10}}+6 P_{{3,4,5,9,10}}$,

\noindent $L_{{109}}=P_{{2,3,5,6,7}}-4 P_{{2,3,4,5,8}}+3 
P_{{1,4,5,6,7}}-6 P_{{1,3,4,6,8}}-6 P_{{1,3,4,5,9}}+2
 P_{{1,2,3,7,9}}$,

\noindent $L_{{110}}=-4 P_{{2,4,5,6,8}}-P_{{2,3,5,7,8}}-3 
P_{{1,4,5,7,8}}+2 P_{{1,3,6,7,8}}-9 P_{{1,3,4,5,10}}+
3 P_{{1,2,3,7,10}}$,

\noindent $L_{{111}}=-5 P_{{2,5,6,7,8}}+4 P_{{2,3,5,8,9}}+18
 P_{{1,4,5,8,9}}-27 P_{{1,4,5,6,10}}-3 P_{{1,3,6,8,9
}}+18 P_{{1,2,3,9,10}}$,

\noindent $L_{{112}}=P_{{2,5,6,8,9}}+2 P_{{2,3,5,8,10}}-2 
P_{{1,5,7,8,9}}-3 P_{{1,5,6,7,10}}-3 P_{{1,3,6,8,10}}
+3 P_{{1,2,6,9,10}}$,

\noindent $L_{{113}}=-5 P_{{3,4,5,6,8}}-8 P_{{2,4,5,7,8}}+
P_{{2,3,6,7,8}}-6 P_{{2,3,4,5,10}}+3 P_{{1,4,6,7,8}}+
18 P_{{1,2,4,7,10}}$,

\noindent $L_{{114}}=-P_{{3,4,5,6,8}}-2 P_{{2,4,5,7,8}}+2 
P_{{2,4,5,6,9}}+P_{{2,3,6,7,8}}+3 P_{{1,4,6,7,8}}+6 
P_{{1,3,4,8,9}}$,

\noindent $L_{{115}}=2 P_{{3,4,5,6,8}}+2 P_{{2,4,5,7,8}}-
P_{{2,3,6,7,8}}+6 P_{{2,3,4,5,10}}-3 P_{{1,4,6,7,8}}+
9 P_{{1,3,4,6,10}}$,

\noindent $L_{{116}}=-2 P_{{3,5,6,7,8}}+2 P_{{2,5,6,7,9}}+6 
P_{{2,4,5,8,9}}-P_{{2,3,6,8,9}}-9 P_{{1,4,6,8,9}}+6 
P_{{1,3,7,8,9}}$,

\noindent $L_{{117}}=2 P_{{3,5,6,7,8}}+P_{{2,5,6,7,9}}-6 
P_{{2,4,5,8,9}}+9 P_{{2,4,5,6,10}}-2 P_{{2,3,6,8,9}}+
6 P_{{2,3,5,7,10}}$,

\noindent $L_{{118}}=5 P_{{3,5,6,8,9}}-4 P_{{2,5,7,8,9}}-3 
P_{{2,5,6,7,10}}+18 P_{{2,4,5,8,10}}-27 P_{{1,4,6,8,
10}}+18 P_{{1,2,7,9,10}}$,

\noindent $L_{{119}}=-5 P_{{4,5,6,7,8}}-P_{{3,5,6,7,9}}+8 
P_{{3,4,5,8,9}}-3 P_{{3,4,5,6,10}}-6 P_{{1,4,7,8,9}}+
18 P_{{1,3,4,9,10}}$,

\noindent $L_{{120}}=-2 P_{{4,5,6,7,8}}-P_{{3,5,6,7,9}}+2 
P_{{3,4,5,8,9}}-3 P_{{3,4,5,6,10}}-6 P_{{1,4,7,8,9}}+
9 P_{{1,4,6,7,10}}$,

\noindent $L_{{121}}=P_{{4,5,6,7,8}}+P_{{3,5,6,7,9}}-2 P_
{{3,4,5,8,9}}+3 P_{{3,4,5,6,10}}-2 P_{{2,4,6,8,9}}+6 
P_{{2,4,5,7,10}}$,

\noindent $L_{{122}}=4 P_{{4,5,6,8,9}}-P_{{3,5,7,8,9}}-2 
P_{{3,5,6,7,10}}+3 P_{{3,4,5,8,10}}-9 P_{{1,4,7,8,10}
}+3 P_{{1,3,7,9,10}}$,

\noindent $L_{{123}}=4 P_{{4,5,7,8,9}}-6 P_{{4,5,6,7,10}}-
P_{{3,6,7,8,9}}+3 P_{{3,4,6,8,10}}-6 P_{{2,4,7,8,10}}
+2 P_{{2,3,7,9,10}}$,

\noindent $L_{{124}}=-4 P_{{2,5,6,7,8}}+2 P_{{2,3,5,8,9}}-3 
P_{{1,5,6,7,9}}+18 P_{{1,4,5,8,9}}-27 P_{{1,4,5,6,10}
}-3 P_{{1,3,6,8,9}}+9 P_{{1,2,6,7,10}}$,

\noindent $L_{{125}}=-5 P_{{3,5,6,7,8}}-P_{{2,5,6,7,9}}+12 
P_{{2,4,5,8,9}}-9 P_{{2,4,5,6,10}}-P_{{2,3,6,8,9}}-9 
P_{{1,4,6,8,9}}+9 P_{{1,3,6,7,10}}$,

\noindent $L_{{126}}=4P_{{3,5,6,8,9}}-2P_{{2,5,7,8,9}}-3
P_{{2,5,6,7,10}}+18P_{{2,4,5,8,10}}-3P_{{2,3,6,8,10
}}-27P_{{1,4,6,8,10}}+9P_{{1,3,6,9,10}}$.

\end{document}